\newtheorem{prop}{Proposition}[section]
\newtheorem{cor}[prop]{Corollaire}
\newtheorem{thm}[prop]{Th{\'e}or{\`e}me}
\newtheorem{ques}[prop]{Question}
\newtheorem{defin}[prop]{D\'efinition}
\newtheorem{rem}[prop]{Remarque}
\newcommand{\Jac} {\mathop{\mathrm{Jac}}}
\newcommand{\Ima} {\mathop{\mathrm{Im}}}
\newcommand{\disc} {\mathop{\mathrm{disc}}}
\newcommand{\ordv} {\mathop{\mathrm{ord}_{v}}}
\newcommand{\Ree} {\mathop{\mathrm{Re}}}
\newcommand{\Card} {\mathop{\mathrm{Card}}}
\newcommand{\Sp} {\mathop{\mathrm{Sp}}}
\newcommand{\divi} {\mathop{\mathrm{div}}}
\newcommand{\pgcd} {\mathop{\mathrm{pgcd}}}
\newcommand{\degr} {\mathop{\mathrm{deg}}}
\newcommand{\degrar} {\mathop{\widehat{\mathrm{deg}}}}
\newcommand{\Spec} {\mathop{\mathrm{Spec}}}
\newcommand{\supp} {\mathop{\mathrm{supp}}}
\newcommand{\Nk} {\mathop{\mathrm{N}_{k/\mathbb{Q}}}}
\newcommand{\Lie} {\mathop{\mathrm{Lie}}}
\newcommand{\Pic} {\mathop{\mathrm{Pic}}}
\newcommand{\hF} {\mathop{h_{\mathrm{F}}}}
\newcommand{\hFplus} {\mathop{h_{\mathrm{F}^+}}}
\begin{document}

\title{D\'ecompositions en hauteurs locales}
\author{Fabien Pazuki}

\maketitle

\begin{centering}

\vspace{0.5cm}

{\small{\textsc{R\'esum\'e} :
Soit $A$ la jacobienne d'une courbe hyperelliptique d\'efinie sur un corps de nombres $k$. On donne une formule de d\'ecomposition de la hauteur de Faltings de $A$ et de la hauteur de N\'eron-Tate des points $k$-rationnels de $A$. On propose de plus en \ref{Bogomolov} une question de type Bogomolov sur l'espace de modules $\mathcal{A}_g$ des vari\'et\'es ab\'eliennes principalement polaris\'ees de dimension $g$.}}
\end{centering}

\vspace{0.3cm}

\begin{centering}
{\small{\textsc{Abstract} : Let $A$ be the jacobian variety of a hyperelliptic curve defined over a number field $k$. We provide a decomposition formula for the Faltings height of $A$ and for the N\'eron-Tate height of $k$-rational points on $A$. We formulate in \ref{Bogomolov} a question of Bogomolov type on the space $\mathcal{A}_g$ of principally polarized abelian varieties of dimension g.}}
\end{centering}

{\flushleft
\textbf{Keywords :} Heights, Abelian varieties, Torsion points, Rational points, Hyperelliptic curves.\\
\textbf{Mathematics Subject Classification :} 11G50, 14G40, 14G05, 11G30, 11G10.}

\section{Introduction}

Soit $E$ une courbe elliptique d\'efinie sur $\overline{\mathbb{Q}}$ et donn\'ee dans un mod\`ele de Weierstrass. L'article \cite{CorSil} est d\'evolu \`a montrer une formule de d\'ecomposition en composantes locales de la hauteur de Faltings stable de $E$ en fonction des invariants classiques du mod\`ele de Weierstrass choisi, voir l'\'enonc\'e du th\'eor\`eme \ref{elliptique} ci-apr\`es.
\\

Le pr\'esent texte propose une g\'en\'eralisation en dimension sup\'erieure de cette formule. On traite des jacobiennes de courbes hyperelliptiques, un cadre o\`u la d\'efinition d'un discriminant est ais\'ee et o\`u ce discriminant joue le m\^eme r\^ole que dans le cas elliptique, reliant notamment des propri\'et\'es de bonne r\'eduction de la vari\'et\'e et des formules closes aux places archim\'ediennes en termes de fonctions th\^eta. A ce titre, les jacobiennes de courbes hyperelliptiques sont une g\'en\'eralisation naturelle. 
\\ 

Plus g\'en\'eralement, soit $A$ une vari\'et\'e ab\'elienne d\'efinie sur $\overline{\mathbb{Q}}$, principalement polaris\'ee, semi-stable de dimension $g\geq 1$, munie d'un fibr\'e $L$ ample et sym\'etrique. On s'int\'eresse \`a trois questions \'etroitement li\'ees. 
\\

Question 1 :  peut-on d\'ecomposer en composantes locales la hauteur diff\'erentielle de $A/k$ ? On s'accordera sur le fait qu'une composante locale en la place $v$ d'un corps $k$ de d\'efinition de $A$ est un nombre r\'eel calculable \`a partir des seules donn\'ees locales de la vari\'et\'e $A$ en $v$. 
\\

Question 2 : peut-on donner des formules explicites pour les hauteurs locales canoniques de N\'eron d'un point $\overline{\mathbb{Q}}$-rationnel sur $A$ ? L'existence de cette d\'ecomposition remonte \`a N\'eron \cite{Neron}.
\\

Question 3 : \'etant donn\'e un point $P\in{A(\overline{\mathbb{Q}})}\subset \mathbb{P}^N(\overline{\mathbb{Q}})$, comment estimer la diff\'erence $\hat{h}(P)-h(P)$ pour un choix de hauteur projective $h$ ? Citons par exemple les travaux \cite{CrePriSik} et \cite{Bru} en dimension 1, \cite{FlySma} et \cite{Stoll} en dimension 2. L'objectif \'etant d'obtenir en dimension quelconque de meilleures bornes que celles existantes dans \cite{MaZa}. 
\\

On cherche ici à apporter une r\'eponse possible \`a ces trois questions, r\'eponse provenant de la construction des mod\`eles de Moret-Bailly des vari\'et\'es ab\'eliennes. On trouvera le n\'ecessaire les concernant dans la section \ref{baseara} du pr\'esent texte. Ces mod\`eles de Moret-Bailly (ou MB-mod\`eles) jouaient d\'ej\`a un r\^ole important dans le travail \cite{Paz2, boda} o\`u leur d\'efinition est aussi rappel\'ee en d\'etails. 

L'article se veut accessible et comporte ainsi plusieurs paragraphes de rappels. Il est organis\'e comme suit. On pr\'esente les formules en d\'etails dans la section \ref{presentation}. La section \ref{base} d\'ecrit les \'enonc\'es de la th\'eorie des hauteurs de points alg\'ebriques utiles \`a la suite. La section \ref{baseara} pr\'esente une partie de la th\'eorie des mod\`eles de Moret-Bailly, permettant de calculer la hauteur d'un point alg\'ebrique par la formule clef dans la section \ref{formuleclef}. Dans la section \ref{hyper} on donne une formule explicite valable dans le cas des jacobiennes de courbes hyperelliptiques. Finalement la section \ref{dim1} montre que toutes les formules propos\'ees pour la hauteur d'une courbe elliptique fournissent le m\^eme r\'esultat.
\\

\section{Pr\'esentation des d\'ecompositions}\label{presentation}

Dans tout le texte on note $M_{k}$ l'ensemble des places du corps de nombres $k$ et $M_{k}^{\infty}$ l'ensemble de ses places archim\'ediennes, $M_{k}^{0}$ d\'esignant l'ensemble de ses places finies. On note $d=[K:\mathbb{Q}]$ son degr\'e. Pour toute place $v$ de $k$ on note $k_{v}$ le compl\'et\'e de $k$ pour la valuation $|.|_{v}$ associ\'ee o\`u on fixe $|p|_{v}=p^{-1}$ pour toute place finie $v$ au-dessus d'un nombre premier $p$. On note de plus $d_v=[k_v:\mathbb{Q}_v]$ pour le degr\'e local. Pour tout vecteur $x=(x_1, ..., x_n)$ de $k_v^n$ on pose  
\begin{center}
$\Vert x\Vert_v=$
$\left\{\begin{tabular}{ll}
$\displaystyle{\left(\sum_{i=1}^n\vert x_i\vert_v^2\right)^{\frac{1}{2}}}\quad \textrm{si $v$ est archim\'edienne,}$\\
$\displaystyle{\max_{1\leq i\leq n}\{\vert x_i\vert_v\}}\quad \mathrm{ sinon.}$
\end{tabular}\right.$
\end{center}

On travaillera avec la \emph{hauteur diff\'erentielle positive} ou \emph{hauteur de Faltings positive} d\'efinie par $\hFplus(A)=\frac{g}{2}\log(2\pi^2)+\hF(A)$, o\`u $\hF(A)$ est la hauteur introduite par Faltings dans \cite{Fal}. Cette version de la hauteur diff\'erentielle poss\`ede l'agr\'eable propri\'et\'e d'\^etre positive, voir \`a ce sujet la remarque \ref{norma} ci-dessous.

Le point de d\'epart de cette \'etude et l'origine de la question 1 est le cas de la dimension 1 o\`u on dispose de la formule suivante exprimant la hauteur diff\'erentielle positive d'une courbe elliptique, montr\'ee par Silverman dans l'ouvrage \cite{CorSil} page 254  (on a corrig\'e une puissance de $2\pi$ dans la d\'efinition du discriminant, voir par exemple la proposition 8.2 de \cite{deJo}, et chang\'e la normalisation des m\'etriques ici) :

\begin{thm}(Silverman)\label{elliptique}
Soit $E$ une courbe elliptique sur un corps de nombres $k$ de degr\'e $d$. Alors on a
\[
\hFplus(E)=\frac{1}{12d}\left[\log N_{k/\mathbb{Q}}(\Delta_E)-\sum_{v\in{M_k^{\infty}}}d_v \log\Big(\vert \Delta(\tau_v) \vert (2\Ima\tau_v)^6\Big)\right],
\]
o\`u $\Delta_E$ est le discriminant minimal de $E$ ; aux places archim\'ediennes $\tau_v$ est une matrice de p\'eriodes associ\'ee \`a $E(\overline{k}_v)$ et $\Delta(\tau_v)=q\prod_{n=1}^{+\infty}(1-q^n)^{24}$ est le discriminant modulaire, en ayant pos\'e $q=e^{2\pi i\tau_v}$.
\end{thm}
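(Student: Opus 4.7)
Le plan consiste à déplier la définition de la hauteur différentielle comme degré d'Arakelov du fibré de Hodge métrisé, puis à calculer ses contributions locales à l'aide d'une différentielle invariante globale bien choisie. Par définition, $\hF(E)=\frac{1}{d}\degrar(\bar{\omega}_{E/\mathcal{O}_k})$, où le faisceau cotangent relatif du modèle de Néron est muni aux places archimédiennes de la métrique $\Vert\omega\Vert_v^2=\frac{i}{2}\int_{E_v(\mathbb{C})}\omega\wedge\bar{\omega}$, le décalage $\frac{1}{2}\log(2\pi^2)$ de la version positive ne se rajoutant qu'à la toute fin.

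Fixons donc une différentielle invariante non nulle $\omega$ sur $E/k$, d'équation de Weierstrass globale de discriminant $\Delta_\omega\in k^*$. En chaque place finie $v$, on écrit $\omega=u_v\,\omega_{\min,v}$ où $\omega_{\min,v}$ engendre localement $\omega_{E/\mathcal{O}_{k_v}}$ ; la loi de transformation classique du discriminant sous un changement de modèle de Weierstrass donne $\Delta_\omega=u_v^{-12}\,\Delta_{\min,v}$, d'où $12\,\ordv(u_v)=\ordv(\Delta_{\min,v})-\ordv(\Delta_\omega)$. En sommant sur les places finies pondérées par $\log N\mathfrak{p}_v$, en reconnaissant $\log\Nk(\Delta_E)=\sum_{v\text{ finie}}\ordv(\Delta_{\min,v})\log N\mathfrak{p}_v$ et en appliquant la formule du produit au nombre $\Delta_\omega$ pour expédier la somme restante aux places archimédiennes, on obtient
\[
12\,\degrar(\bar{\omega}_{E/\mathcal{O}_k})=\log\Nk(\Delta_E)-\sum_{v\in M_k^\infty}d_v\bigl[\log|\Delta_\omega|_v+12\log\Vert\omega\Vert_v\bigr].
\]

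Il reste à traiter l'intégrande archimédienne. En une place $v$ à l'infini, on uniformise $E_v(\mathbb{C})=\mathbb{C}/(\omega_1\mathbb{Z}+\omega_2\mathbb{Z})$ par les périodes de $\omega$ et on pose $\tau_v=\omega_2/\omega_1$ dans le demi-plan de Poincaré. Un calcul direct d'aire donne $\Vert\omega\Vert_v^2=|\omega_1|^2\,\Ima\tau_v$. Puisque $\Delta(\tau)=q\prod_{n\geq 1}(1-q^n)^{24}$ est une forme modulaire de poids $12$, on dispose par ailleurs de l'identité $|\Delta_\omega|_v\,|\omega_1|^{12}=(2\pi)^{12}\,|\Delta(\tau_v)|$. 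La combinaison des deux fait disparaître le facteur auxiliaire $|\omega_1|$ et produit
\[
\log|\Delta_\omega|_v+12\log\Vert\omega\Vert_v=\log\bigl((2\pi)^{12}\,|\Delta(\tau_v)|\,(\Ima\tau_v)^6\bigr).
\]
On divise alors par $12d$ et on ajoute $\frac{1}{2}\log(2\pi^2)$ pour passer à la hauteur de Faltings positive, ce qui conclut.

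L'unique difficulté est de nature comptable : il faut conjuguer sans erreur les conventions de normalisation (métrique $L^2$, choix $|p|_v=p^{-1}$, poids local $d_v$ aux places complexes), les lois de transformation opposées $\Delta\mapsto u^{12}\Delta$ et $\omega\mapsto u^{-1}\omega$, et la normalisation précise de $\Delta(\tau)$, qui varie dans la littérature par des facteurs de $2\pi$. L'apparition naturelle de l'exposant $6$ devant $\Ima\tau_v$ (comme moitié du poids $12$) traduit \emph{a posteriori} l'indépendance du membre de droite vis-à-vis du choix initial de $\omega$, ce qui légitime toute la construction.
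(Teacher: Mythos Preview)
Your argument is correct and is exactly the classical one the paper has in mind when it attributes the statement to Silverman in \cite{CorSil}: the paper does not give an independent proof of this theorem in the introduction, it merely quotes it as the $g=1$ starting point. That said, your computation is precisely the $g=1$ specialization of the paper's own proof of Th\'eor\`eme~\ref{hauteur faltings jacobienne} in Section~\ref{hyper}, where the model-independent section $\eta=\Delta_E^{g}(\omega_1\wedge\cdots\wedge\omega_g)^{\otimes(8g+4)}$ plays the role of your invariant combination $\Delta_\omega\cdot\omega^{\otimes 12}$, and Propositions~\ref{hyperelliptique finie} and~\ref{hyperelliptique infinie} are the finite and archimedean halves of your calculation. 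The paper also recovers the formula by a genuinely different route in the paragraph ``Calcul explicite en dimension~1'' of Section~\ref{preuves}: there the elliptic terms are obtained by specializing the general decomposition $\alpha_v=2(\beta_v-\mu_v)$ of Th\'eor\`eme~\ref{decom}, i.e.\ through the Mumford theta embedding and the telescoping duplication series, rather than through the Hodge bundle directly. Your approach is shorter and self-contained; the paper's alternative is what shows that the abstract $\alpha_v$ actually compute the expected quantities.
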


%Une formule de d\'ecomposition pour les jacobiennes de courbes de genre 2 avec bonne r\'eduction en toute place divisant $2$ est dans l'article \cite{Ueno} page 765. Il manque une puissance de $\pi$ dans sa formule liant le discriminant arithm\'etique et le discriminant modulaire en bas de la page 764, en refaisant le calcul on obtient :

%\begin{thm}(Ueno)
%Soit $C$ une coube lisse de genre 2 sur un corps de nombres $k$ ; on note $A=\Jac(C)$ sa jacobienne. On suppose de plus que la courbe localis\'ee $C_{v}$  est lisse de genre 2 en toute place $v$
%divisant $2$. On note $\Delta_{\mathrm{Ueno}}$ le discriminant du mod\`ele d'Igusa de la courbe $C$ d\'efini dans \cite{Ueno}. On notera $\mathcal{Z}_{2}$ l'ensemble des caract\'eristiques paires de dimension 2. Alors
%$$\hFplus(A)=\frac{1}{10d}\left[ \log N_{k/\mathbb{Q}}(\Delta_{\mathrm{Ueno}}) -\! \sum_{v\in{M_{k}^{\infty}}}d_{v}\log \Big|2^{-2} J_{10}(\tau_v)\det(\Ima \tau_{v})^{5}\Big|\right],$$
%o\`u $$J_{10}(\tau_v)=\prod_{m\in{\mathcal{Z}_{2}}}\theta_m(0,\tau_v)^2.$$
%\end{thm}

%

On cherche \`a g\'en\'eraliser cette formule en toute dimension. Dans le texte \cite{Aut2}, Autissier prouve une formule valable en dimension g\'en\'erale dans le cas o\`u la vari\'et\'e a potentiellement bonne r\'eduction partout:

\begin{thm}(Autissier)\label{Autissier}
Soit $A$ une vari\'et\'e ab\'elienne de dimension $g$ sur un corps de nombres $k$. Supposons que $A$ a potentiellement bonne r\'eduction partout. Soit $\Theta$ un diviseur sym\'etrique et ample sur $A$, d\'efinissant une polarisation principale $\lambda$. On pose $L=\mathcal{O}_A(\Theta)$ et on note $\mu$ la mesure de Haar de $A(\mathbb{C})$ de masse 1. Alors
$$\hFplus(A)=2g\hat{h}_L(\Theta)+\frac{2}{d}\sum_{v\in{M_k^{\infty}}}d_v I(A_v,\lambda_v),$$
o\`u $\displaystyle{I(A_v,\lambda_v)=-\int_{A(\overline{k}_v)}\log\Vert s\Vert_v \mu+\frac{1}{2}\log\int_{A(\overline{k}_v)}\Vert s\Vert_v^2 \mu}$ est positif et ind\'ependant du choix de section $s$ non nulle du  fibr\'e $L_v$.
\end{thm}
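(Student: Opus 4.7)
The approach is to prove the identity by Arakelov arithmetic intersection theory on a Moret-Bailly model of the polarized pair $(A,L)$. First, using the hypothesis of potentially everywhere good reduction, base-change to a finite extension $k'/k$ over which $A_{k'}$ acquires everywhere good reduction: semistability of $A$ ensures that $\hFplus$ is preserved, and both $\hat{h}_L(\Theta)$ and the archimedean integrals $I(A_v,\lambda_v)$ are manifestly invariant under finite extension. Both sides of the identity being preserved, one may therefore assume $A$ has everywhere good reduction over $k$ itself.

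Let $\pi:\mathcal{A}\to\Spec\mathcal{O}_k$ be the Néron model (an abelian scheme under this hypothesis), and choose a Moret-Bailly model: a cubist, symmetric, relatively ample line bundle $\mathcal{L}$ on $\mathcal{A}$ extending $L$. At each archimedean place $v$, two natural hermitian metrics on $\mathcal{L}_v$ appear: the \emph{cubist metric} $\|\cdot\|_{\mathrm{cub},v}$, whose first Chern form is translation-invariant (a multiple of $\mu$), and the \emph{$L^2$-metric} $\|\cdot\|_{L^2,v}$, normalized so that $\int_{A(\bar k_v)} \|s\|_{L^2,v}^2\,\mu = 1$ for some fixed nonzero section $s$. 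These two metrics differ by a global scalar factor depending only on~$v$.

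The core of the argument combines two ingredients. The Moret-Bailly key formula expresses $\hFplus(A)$ in terms of the arithmetic self-intersection $(\overline{\mathcal{L}}^{g+1})$ of $\mathcal{L}$ equipped with the cubist metric; this is the higher-dimensional analogue of Silverman's formula \ref{elliptique}. On the other hand, by the Gillet-Soulé-Bost arithmetic intersection formalism, the Néron-Tate height $\hat{h}_L(\Theta)$ equals, up to the normalization $d\cdot g!$, the same self-intersection $(\overline{\mathcal{L}}^{g+1})$ computed with the $L^2$-metric (using $L^g = g!$ by Riemann-Roch for the principal polarization). Subtracting the two expressions, the change-of-metric formula in Arakelov theory evaluates the difference as $\sum_{v\in M_k^\infty} d_v \int \log(\|\cdot\|_{L^2,v}/\|\cdot\|_{\mathrm{cub},v})\,\mu$, and a direct calculation identifies this sum with $\frac{2}{d}\sum_{v\in M_k^\infty} d_v\, I(A_v,\lambda_v)$.

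The positivity $I(A_v,\lambda_v)\geq 0$ is Jensen's inequality applied to the concave function $\log$ and the probability measure $\mu$, and independence of the choice of section $s$ is immediate since $\dim H^0(A,L) = 1$ for a principal polarization. The main obstacle is the careful bookkeeping of combinatorial normalizations: the coefficient $2g$ in the theorem arises as the combined effect of the divisor-height normalization $1/(g+1)$, the equality $L^g = g!$, and the double role $\Theta$ plays as defining polarization and as cycle of intersection. A secondary delicate point is to verify that the archimedean contribution produced by the Moret-Bailly key formula takes exactly the form $I(A_v,\lambda_v)$, rather than some other combination of logarithms of theta functions, which requires tracking the precise archimedean data encoded in the cubist metric relative to the $L^2$ normalization.
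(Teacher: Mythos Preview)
The paper does not contain a proof of this theorem: it is stated in the introduction as a known result due to Autissier, with reference to \cite{Aut2}, and serves purely as motivation and context for the paper's own contributions (Théorème~\ref{decom} and its corollaries). There is therefore no proof in the present paper against which to compare your proposal.

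That said, your outline is broadly in the spirit of Autissier's original argument in \cite{Aut2}: reduction to the case of everywhere good reduction, use of the cubist extension of $L$ on the abelian Néron model, and comparison of $\hFplus(A)$ with $\hat{h}_L(\Theta)$ via arithmetic self-intersection numbers of $\overline{\mathcal{L}}$ together with a change of hermitian metric at the archimedean places. The Jensen argument for positivity of $I(A_v,\lambda_v)$ and the remark that $\dim H^0(A,L)=1$ gives independence of the section $s$ are both correct. Your description of the combinatorial bookkeeping (the emergence of the factor $2g$, the role of $L^g=g!$) is admittedly vague, and pinning it down precisely is indeed where most of the actual work lies; but as a plan of proof the proposal is sound and aligned with the published argument.
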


L'article \cite{Aut2} contient de plus un r\'esultat de d\'ecomposition de $h_{F^+}(A)-2g\hat{h}_L(\Theta)$ inconditionnel lorsque $A$ est un produit de courbes elliptiques et de surfaces ab\'eliennes. On voudrait arriver \`a une formule explicite tout en se passant de l'hypoth\`ese de bonne r\'eduction.

On traite partiellement la premi\`ere question dans la partie \ref{hyper} avec une d\'ecomposition explicite de la hauteur de Faltings pour les jacobiennes de courbes hyperelliptiques de genre $g$. Cette formule n'appara\^it pas dans la litt\'erature mais est probablement connue des experts. Comme en dimension 1, le calcul est facilit\'e par un choix de section tr\`es agr\'eable et bas\'e sur l'existence d'un discriminant de la courbe caract\'erisant la mauvaise r\'eduction aux places finies et d\'ecrit comme une forme modulaire aux places infinies. Il faut cependant tenir compte du comportement de cette section le long du bord de l'espace de modules de courbes. C'est la g\'en\'eralisation de la formule de Ueno de l'article \cite{Ueno}, \'etablie ici gr\^ace \`a l'utilisation des articles de Lockhart \cite{Lock}, Kausz \cite{Kausz} et de Jong \cite{Dej}. Quelques notations tout d'abord : pour $m\in{\frac{1}{2}\mathbb{Z}^{2g}}$ on pose $\varphi_{m}(\tau)=\theta_{m}(0,\tau)^{8},$ o\`u $\theta_{m}(z,\tau)$ est la fonction th\^eta de caract\'eristique $m$ associ\'ee au r\'eseau de
dimension $g$ dont la d\'efinition est rappel\'ee en (\ref{fonction theta}). Si $S$ est un sous-ensemble de $\{1,2,...,2g+1\}$ on d\'efinit alors
$\displaystyle{m_{S}=\sum_{i\in{S}}m_{i}\in{\frac{1}{2}\mathbb{Z}^{2g}}}$ avec :

\begin{center}
\begin{tabular}{llll}
$m_{2i-1}$ & $=$ & $\left[ \begin{array}{ccccccc}
^{t}(0 & ... & 0 & \frac{1}{2} & 0 & ... & 0) \\
^{t}(\frac{1}{2} & ... & \frac{1}{2} & 0 & 0 & ... & 0) \\ 	
\end{array}\right],$ & $1\leq i \leq g+1 \, ,$ \\
\\

\end{tabular}
\end{center}

\begin{center}
\begin{tabular}{llll}
$m_{2i}$ & $=$ & $\left[ \begin{array}{ccccccc}
^{t}(0 & ... & 0 & \frac{1}{2} & 0 & ... & 0) \\
^{t}(\frac{1}{2} & ... & \frac{1}{2} & \frac{1}{2} & 0 & ... & 0) \\ 	
\end{array}\right],$ & $1\leq i \leq g \, ,$ \\
\\

\end{tabular}
\end{center}

\noindent o\`u le coefficient non nul de la premi\`ere ligne est en i-\`eme
position. Soit alors $\mathcal{T}$ la collection des sous-ensembles de
$\{1,...,2g+1\}$ de cardinal $g+1$. Soit $U=\{1,3,...,2g+1\}$ et
notons $\circ$ l'op\'erateur de diff\'erence sym\'etrique. On d\'efinit alors
:

\begin{equation}\label{phitheta}
\varphi(\tau)=\prod_{T\in{\mathcal{T}}}\varphi_{m_{T\circ U}}(\tau).
\end{equation}

\begin{thm}\label{hauteur faltings jacobienne}
Soient $k$ un corps de nombres et $C$ une courbe hyperelliptique d\'efinie sur $k$, semi-stable et de genre $g\geq1$. On note $J_C$ sa jacobienne. On pose $l=\binom{2g+1}{g+1}$. Pour chaque place archim\'edienne $v$ on note $\tau_v$ une matrice de l'espace de Siegel\footnote{La d\'efinition de cet espace est rappel\'ee dans la section suivante.} ${\mathfrak S}_g$ telle que ${\mathcal A}_{v}(\mathbb{C})\simeq\mathbb{C}^g/(\mathbb{Z}^g+\tau_{v}\mathbb{Z}^g)$ 
comme vari\'et\'es ab\'eliennes principalement polaris\'ees et $\Delta_{\mathrm{min}}$ le discriminant minimal de $C/k$. Il existe des entiers $e_v\geq 0$ et des r\'eels $f_v\geq 0$ tels que $(8g+4)\cdot f_v=g \cdot \mathrm{ord}_v(\Delta_{\mathrm{min}})-(8g+4)e_v$ et tels que
la hauteur de Faltings de $J_C$ soit donn\'ee par la formule :
$$d\cdot\hFplus(J_C)=\sum_{v\vert\Delta_{\mathrm{min}}}d_v f_v\log\Nk(v)-\sum_{v\in{M_k^{\infty}}}d_v \log\left(2^{-\frac{2g}{8g+4}}|\varphi(\tau_{v})|^{\frac{1}{4l}}\det(\Ima
\tau_{v})^{\frac{1}{2}}\right),$$
o\`u $\varphi$ est un produit explicite de constantes th\^eta donn\'e par la formule (\ref{phitheta}). Dans le cas particulier des surfaces ab\'eliennes, on obtient
$$d\cdot\hFplus(J_C)=\sum_{v\vert\Delta_{\mathrm{min}}}d_v f_v\log\Nk(v)-\sum_{v\in{M_k^{\infty}}}d_v \log\left(2^{-\frac{1}{5}}|J_{10}(\tau_{v})|^{\frac{1}{10}}\det(\Ima
\tau_{v})^{\frac{1}{2}}\right),$$ avec $\displaystyle{J_{10}(\tau)=\prod_{m\in{Z_2}}\theta_m(0,\tau_v)^2}$, le produit portant sur l'ensemble $Z_2$ des 10 caract\'eristiques th\^eta paires en dimension 2.
\end{thm}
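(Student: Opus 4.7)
The plan is to realise $\hFplus(J_C)$ as the arithmetic degree of the Hodge bundle $\omega=\det\pi_*\Omega^1_{\mathcal{C}/O_k}$ of a semi-stable model $\mathcal{C}\to\mathrm{Spec}(O_k)$ of $C$, and to evaluate $\degrar(\overline{\omega})$ through a single well-chosen global section built from the Siegel modular form $\varphi$. The three cited papers supply the three ingredients: Lockhart's theorem identifying $\varphi$ with an explicit power of the discriminant of a hyperelliptic Weierstrass equation via Thomae's formula; Kausz's comparison between the Weierstrass discriminant and the minimal semi-stable discriminant at finite places, which is the source of the corrective integers $e_v$; and de Jong's arithmetic Riemann--Roch-type identity assembling these data into a height formula.

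Concretely, I would fix a Weierstrass equation $y^2=f(x)$ of $C$, giving the standard basis $(\omega_i)=(x^{i-1}dx/y)_{1\leq i\leq g}$ of $H^0(C,\Omega^1)$, and consider $s=(\omega_1\wedge\cdots\wedge\omega_g)^{\otimes 4l}$, which is a rational section of $\omega^{\otimes 4l}$. Lockhart shows that $s$ and the value $\varphi(\tau)$ of $\varphi$ at the period matrix of $C$ differ by a rational function whose divisor is supported on the discriminant locus of $f$; the weight $4l$ of $\varphi$ as a Siegel modular form on $\mathcal{A}_g$ and the denominator $8g+4$ are dictated by Thomae's formula and the structure of the hyperelliptic Siegel modular forms. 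At an archimedean place $v$, the Riemann bilinear relations yield $\|\omega_1\wedge\cdots\wedge\omega_g\|_v^2$ proportional to $\det(\Ima\tau_v)$; combined with Lockhart's identity, this produces the archimedean contribution $\log\bigl(2^{-2g/(8g+4)}|\varphi(\tau_v)|^{1/4l}\det(\Ima\tau_v)^{1/2}\bigr)$ after normalisation.

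At a finite place $v$, the vanishing order of $s$ viewed as a section of $\omega^{\otimes 4l}$ on $\mathcal{C}$ is related to the Weierstrass discriminant by Kausz's theorem: the valuation of the Weierstrass discriminant exceeds the intrinsic arithmetic contribution by precisely $(8g+4)e_v$, where $e_v\geq 0$ is a non-negative integer computed from the combinatorics of the singular fibre at $v$. This is exactly the relation $(8g+4)f_v=g\cdot\mathrm{ord}_v(\Delta_{\min})-(8g+4)e_v$ in the statement. Summing the local contributions over all $v\in M_k$ via $\degrar(\overline{\omega^{\otimes 4l}})=4l\,d\,\hF(J_C)$, dividing by $4l$ and adding $\tfrac{g}{2}\log(2\pi^2)$ to pass from Faltings' original normalisation to $\hFplus$, one recovers the announced decomposition.

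The main difficulty lies in the careful bookkeeping of weights and exponents, and above all in verifying that Kausz's discriminant comparison holds in the full semi-stable (not merely smooth) setting so as to produce the integers $e_v\geq 0$; this is precisely the content of Kausz's theorem extending Lockhart's work. The specialisation to $g=2$ is then immediate: $l=\binom{5}{3}=10$ is the number of even theta characteristics in dimension two, and since the space of weight-$10$ Siegel cusp forms on $\mathcal{A}_2$ is one-dimensional and generated by Igusa's form $J_{10}=\prod_{m\in Z_2}\theta_m(0,\tau)^2$, one finds $\varphi=c\cdot J_{10}^{4}$, whence $|\varphi|^{1/4l}=|J_{10}|^{1/10}$ up to an absorbable constant, producing the second formula.
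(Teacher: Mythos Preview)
Your overall strategy---realise $\hFplus(J_C)$ as the normalised Arakelov degree of the Hodge bundle $\omega=\det p_*\Omega^1_{\mathcal{C}/\mathcal{O}_k}$ and evaluate that degree via Lockhart at the archimedean places and Kausz at the finite places---is exactly the paper's. The gap is in your choice of section and the resulting local bookkeeping.

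The section $s=(\omega_1\wedge\cdots\wedge\omega_g)^{\otimes 4l}$ is model-dependent and does \emph{not} produce the local terms in the statement. At an archimedean place the Riemann relations give $\|\omega_1\wedge\cdots\wedge\omega_g\|_v^2=(2\pi)^{-2g}2^g\,V(\Lambda_E)$, which depends on the covolume of the period lattice and hence on the chosen Weierstrass model; passing through Lockhart's identity leaves a residual factor $|\Delta_E|_v^{-g/(8g+4)}$ that cannot be dropped. At a finite place Kausz's theorem, once unwound, says precisely $\mathrm{ord}_v(\omega_1\wedge\cdots\wedge\omega_g)=-e_v$, so $\mathrm{ord}_v(s)/4l=-e_v$, not $f_v$. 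Each of your local terms is therefore off by $\pm\frac{g}{8g+4}\log|\Delta_E|_v$; the product formula rescues the global height, but the decomposition you obtain is not the one claimed.

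The paper repairs this by taking instead $\eta=\Delta_E^{\,g}(\omega_1\wedge\cdots\wedge\omega_g)^{\otimes(8g+4)}$, a section of $\omega^{\otimes(8g+4)}$ (note the exponent is $8g+4$, not $4l$). The exponents are chosen so that $\eta$ is independent of the Weierstrass model: under the substitutions $(*)$ one has $\Delta_E\mapsto u^{4g(2g+1)}\Delta_E$ and $\omega_1\wedge\cdots\wedge\omega_g\mapsto u^{-g^2}(\omega_1\wedge\cdots\wedge\omega_g)$, and $g\cdot 4g(2g+1)=g^2\cdot(8g+4)$. Kausz then shows $\eta$ extends integrally with $\mathrm{ord}_v(\eta)=g\,\mathrm{ord}_v(\Delta_E)-(8g+4)e_v=(8g+4)f_v\geq 0$, and Lockhart's formula yields $\|\eta\|_v^{1/(8g+4)}=2^{-2g/(8g+4)}|\varphi(\tau_v)|^{1/4l}\det(\Ima\tau_v)^{1/2}$ on the nose. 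Two further small points: the $(2\pi)^{2g}$ is already built into the metric \eqref{metrique}, so $\degrar(\omega)/d=\hFplus(J_C)$ directly and no $\frac{g}{2}\log(2\pi^2)$ is added at the end; and for $g=2$ the identity $\varphi=J_{10}^{\,4}$ follows immediately from the definition of $\varphi$ (the ten $m_{T\circ U}$ run exactly over the ten even characteristics), so there is no need to invoke the structure of the ring of Siegel modular forms.
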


Pour une courbe elliptique, toutes ces d\'ecompositions co\"incident avec le th\'eor\`eme \ref{elliptique}, ce fait est v\'erifi\'e dans la section \ref{dim1}. Pour les surfaces ab\'eliennes, cela donne une version d\'emontr\'ee de la formule qu'on peut trouver dans l'article \cite{Ueno} page 765, en pr\'ecisant les contributions aux places finies et les conventions pour $J_{10}$. La comparaison des termes locaux entre la formule d'Autissier et les formules de Ueno et du th\'eor\`eme \ref{hauteur faltings jacobienne} est moins ais\'ee car le terme $\hat{h}_L(\Theta)$ n'est pas trivial en dimension $g\geq 2$.

On se donne \`a pr\'esent un MB-mod\`ele de niveau $r=4$ sur $k$, qui est un bon cadre pour obtenir l'\'enonc\'e suivant. Un tel mod\`ele existe toujours modulo une \'eventuelle extension finie de corps de nombres. On suppose en particulier que les points de $16$-torsion de $A$, dont l'ensemble est not\'e $A[16]$, sont rationnels sur $k$.  On pourra consulter la section \ref{baseara} (voir aussi l'article \cite{Paz2} et ses r\'ef\'erences) pour une pr\'esentation d\'etaill\'ee de ces mod\`eles et de leurs propri\'et\'es. On montre alors comme corollaire direct du th\'eor\`eme \ref{hauteur faltings jacobienne} et de la proposition \ref{clef} :

\begin{cor}\label{coro1}
Soit $A$ une jacobienne de courbe hyperelliptique de dimension $g$ d\'efinie sur $\overline{\mathbb{Q}}$, semi-stable et munie d'un fibr\'e $L$ ample et sym\'etrique portant une polarisation principale. Soit $R\in{A(\overline{\mathbb{Q}})}$, soit $k$ un corps de d\'efinition de $A$, $A[16]$, $R$. Alors on a la formule
\[
\widehat{h}_{A,L^{\otimes 16}}(R)  -  h_{A,L^{\otimes 16}}(R)=\frac{1}{d}\sum_{v\in{M_k}} d_v \alpha_{A,L,v}(R)
\]
o\`u $\alpha_{A,L,v}(R)$ est donn\'e par la diff\'erence des termes locaux dans la d\'efinition \ref{beta3} et dans le th\'eor\`eme \ref{hauteur faltings jacobienne}, en tenant compte du facteur $1/2$.
\end{cor}

Ce corollaire est donc une r\'eponse possible pour les questions 2 et 3 propos\'ees plus haut. Obtenir de telles d\'ecompositions permet de mener des calculs explicites place par place.
Cela a des applications, par exemple dans le proc\'ed\'e de saturation du groupe de Mordell-Weil (\textit{i.e.} la recherche de g\'en\'erateurs explicites du groupe des points rationnels), o\`u il est important de savoir estimer la diff\'erence entre hauteur canonique et hauteur na\"ive d'un point rationnel. On sait depuis Manin-Zarhin \cite{MaZa} pour les points et David-Philippon \cite{DavPhi} pour les sous-vari\'et\'es que la valeur absolue de cette diff\'erence est \textit{major\'ee} par la hauteur de la vari\'et\'e ab\'elienne ambiante, \`a une constante explicite pr\`es. On trouve dans la proposition \ref{clef} (ou corollaire \ref{coro1} ci-dessus) une \textit{\'egalit\'e} permettant d'affirmer, avec le th\'eor\`eme \ref{hauteur faltings jacobienne}, qu'on peut estimer cette diff\'erence en menant des calculs locaux.

\subsection*{Remerciements}
L'auteur remercie chaleureusement Pascal Autissier et Ga\"el R\'emond pour leurs conseils pr\'ecieux. Merci \`a l'arbitre de publication pour son travail. L'auteur est soutenu par le programme ANR-14-CE25-0015 Gardio, par ANR-17-CE40-0012 Flair et par la chaire Niels Bohr DNRF de Lars Hesselholt.

\section{Hauteurs globales, hauteurs locales}\label{base}

On rappelle dans cette section un th\'eor\`eme de N\'eron, la d\'efinition de l'espace de Siegel et la d\'efinition de la hauteur de Faltings.

\subsection*{Th\'eor\`eme de N\'eron}
Une hauteur de Weil $h_{A,\mathcal{D}}$ associ\'ee \`a un diviseur $\mathcal{D}$ sur une vari\'et\'e ab\'elienne $A/k$ est par d\'efinition une somme index\'ee par les places de $k$ de fonctions $\lambda_{\mathcal{D},v}$ \`a valeurs r\'eelles (d\'efinies hors du diviseur $\mathcal{D}$). C'est une fonction v\'erifiant la relation suivante (issue du th\'eor\`eme du cube) : il existe une constante $c$ telle que pour tous points $P,Q,R \in{A(k)}$, et en notant temporairement $h=h_{A,\mathcal{D}}$ :
$$\Big|h(P+Q+R)-h(P+Q)-h(Q+R)-h(R+P)+h(P)+h(Q)+h(R)\Big| \leq c.$$
Si on suppose de plus que le diviseur $\mathcal{D}$ est sym\'etrique on obtient (en prenant $R=-Q$) une relation de quasi-parall\'elogramme :
$$\Big|h(P+Q)+h(P-Q)-2h(P)-2h(Q)\Big| \leq c.$$
Le passage \`a la limite effectu\'e pour d\'efinir la hauteur de N\'eron-Tate permet d'obtenir $c=0$. Cette construction offre donc l'avantage suivant : la hauteur de N\'eron-Tate devient une forme quadratique, dont le cône isotrope est le sous-groupe de torsion de la vari\'et\'e ab\'elienne. Le th\'eor\`eme suivant de N\'eron offre la possibilit\'e de d\'ecomposer cette hauteur canonique aussi (voir \cite{HiSi} page 242) :

\begin{thm} \label{decomposition}(N\'eron)  Soit $A/k$ une vari\'et\'e ab\'elienne d\'efinie sur un corps de
nombres $k$. Soit $M_{k}$ l'ensemble des places de $k$. Pour tout diviseur $\mathcal{D}$ sur $A$ on note $A_{\mathcal{D}}=A\backslash\supp(\mathcal{D})$. Alors pour toute place
$v\in{M_{k}}$ il existe une fonction hauteur locale, unique \`a une fonction constante pr\`es :
$$\widehat{\lambda}_{\mathcal{D},v}:A_{\mathcal{D}}(k_{v})\longrightarrow \mathbb{R}, $$
appel\'ee hauteur locale canonique, d\'ependant du choix de $\mathcal{D}$ et
v\'erifiant les propri\'et\'es suivantes, avec $\gamma_{i,v}$ des constantes d\'ependant de $v$ :

\begin{enumerate}
\item $ \widehat{\lambda}_{\mathcal{D},v}-\lambda_{\mathcal{D},v}$ est une fonction born\'ee.
\item Pour tous diviseurs $\mathcal{D}_{1}$ et $\mathcal{D}_{2}$ sur $A$, on a $ \widehat{\lambda}_{\mathcal{D}_{1}+\mathcal{D}_{2},v}= \widehat{\lambda}_{\mathcal{D}_{1},v}+
  \widehat{\lambda}_{\mathcal{D}_{2},v}+\gamma_{1,v} $.
\item Si $\mathcal{D}=\divi(f)$, alors $\widehat{\lambda}_{\mathcal{D},v}=v\circ f+\gamma_{2,v}$, o\`u $v(.)=-\log\vert .\vert_v$.
\item Si $\Phi:B\rightarrow A$ est un morphisme entre deux vari\'et\'es ab\'eliennes
  alors on a la relation  $ \widehat{\lambda}_{\Phi^{*}\mathcal{D},v}=
  \widehat{\lambda}_{\mathcal{D},v}\circ \Phi+\gamma_{3,v}$.
\item Soit $Q \in{A(k)}$ et soit $t_{Q}: A \rightarrow A $ la translation
  par $Q$. Alors on a la relation : $\widehat{\lambda}_{t_{Q}^{*}\mathcal{D},v}=
  \widehat{\lambda}_{\mathcal{D},v}\circ t_{Q}+\gamma_{4,v}$.
\item Soit $\widehat{h}_{A,\mathcal{D}}$ la hauteur globale canonique de $A$ associ\'ee
  \`a $\mathcal{D}$. Il existe une constante $\hat{c}$ telle que, pour tout $P\in{A_{\mathcal{D}}(k)}$ :
  $$ \widehat{h}_{A,\mathcal{D}}(P)=\frac{1}{d}\sum_{v\in{M_{k}}}d_v \widehat{\lambda}_{\mathcal{D},v}(P) + \hat{c}. $$
\item{Si $\mathcal{D}$ v\'erifie $[2]^{*}\mathcal{D}=4\mathcal{D}+\divi(f)$ pour $f$ une fonction rationnelle sur $A$ et si l'on fixe les constantes de telle sorte que, pour tout $P\in{A_\mathcal{D}}$ avec $[2]P\in{A_{\mathcal{D}}}$, on ait la relation $(*)\;\widehat{\lambda}_{\mathcal{D},v}([2]P)=4\widehat{\lambda}_{\mathcal{D},v}(P)+v(f(P))$, alors pour tout $P\in{A_{\mathcal{D}}}$:
$$\widehat{h}_{A,\mathcal{D}}(P)=\frac{1}{d}\sum_{v\in{M_{k}}}d_v \widehat{\lambda}_{\mathcal{D},v}(P).$$
(Notons que $f$ est unique \`a multiplication par une constante $a\in{k^{*}}$ pr\`es. Notons aussi que la relation $(*)$ permet de fixer la constante $\hat{c}=0$ dans l'item pr\'ec\'edent.) }
 
\end{enumerate} 

\end{thm}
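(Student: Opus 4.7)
Mon plan est d'adapter le proc\'ed\'e de limite de Tate, d\'ej\`a utilis\'e pour construire $\widehat{h}_{A,\mathcal{D}}$, au niveau local place par place. Je commencerai par supposer $\mathcal{D}$ sym\'etrique : le th\'eor\`eme du carr\'e donne $[2]^{*}\mathcal{D}\sim 4\mathcal{D}$, donc il existe $f\in k(A)^{*}$ telle que $[2]^{*}\mathcal{D}=4\mathcal{D}+\divi(f)$. Partant d'une hauteur locale de Weil $\lambda_{\mathcal{D},v}$ (qu'on construit par la machine de Weil via des fonctions de Green aux places archim\'ediennes et des mod\`eles entiers aux places finies), les axiomes de fonctorialit\'e (pullback par $[2]$, additivit\'e en le diviseur, cas d'un diviseur principal) entra\^inent que la fonction
$$\psi_{v}(P)=\lambda_{\mathcal{D},v}([2]P)-4\lambda_{\mathcal{D},v}(P)-v(f(P))$$
s'\'etend en une fonction born\'ee sur $A(k_{v})$ tout entier, les singularit\'es logarithmiques des trois termes se compensant exactement en vertu de l'\'egalit\'e de diviseurs ci-dessus.

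Je poserai alors
$$\widehat{\lambda}_{\mathcal{D},v}(P):=\lambda_{\mathcal{D},v}(P)+\sum_{n=0}^{+\infty}\frac{1}{4^{n+1}}\psi_{v}([2^{n}]P),$$
s\'erie qui converge absolument et uniform\'ement puisque $\psi_{v}$ est born\'ee ; cela entra\^ine imm\'ediatement la propri\'et\'e 1. Un r\'earrangement des indices donne l'\'egalit\'e exacte $\widehat{\lambda}_{\mathcal{D},v}([2]P)=4\widehat{\lambda}_{\mathcal{D},v}(P)+v(f(P))$. L'unicit\'e \`a constante pr\`es en d\'ecoule : si $g$ est born\'ee et v\'erifie $g\circ[2]=4g$, alors $|g(P)|=4^{-n}|g([2^{n}]P)|\to 0$, donc $g\equiv 0$. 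Je d\'eduirai alors les propri\'et\'es 2 \`a 5 en constatant que les deux membres de chaque \'egalit\'e (obtenus en appliquant le proc\'ed\'e \`a $\mathcal{D}_{1}+\mathcal{D}_{2}$, $\divi(f)$, $\Phi^{*}\mathcal{D}$, $t_{Q}^{*}\mathcal{D}$) satisfont la m\^eme relation de type $(*)$ modulo une constante, qui est exactement la $\gamma_{i,v}$. Le cas d'un diviseur non sym\'etrique s'obtiendra par d\'ecomposition $2\mathcal{D}=(\mathcal{D}+[-1]^{*}\mathcal{D})+(\mathcal{D}-[-1]^{*}\mathcal{D})$ et traitement s\'epar\'e de la partie antisym\'etrique, pour laquelle $[2]^{*}\mathcal{D}^{-}\sim 2\mathcal{D}^{-}$ conduit \`a une s\'erie de raison $1/2$ au lieu de $1/4$.

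Pour la propri\'et\'e 6, j'appliquerai le m\^eme proc\'ed\'e de Tate \`a l'identit\'e $h_{A,\mathcal{D}}(Q)=\frac{1}{d}\sum_{v}d_{v}\lambda_{\mathcal{D},v}(Q)+O(1)$, o\`u la constante est ind\'ependante de $Q$, en passant \`a la limite via $\widehat{h}_{A,\mathcal{D}}(P)=\lim_{n}4^{-n}h_{A,\mathcal{D}}([2^{n}]P)$. L'interversion limite-somme est justifi\'ee par le fait qu'en chaque point seul un nombre fini de $\lambda_{\mathcal{D},v}$ sont non born\'es, et fournit $\widehat{h}_{A,\mathcal{D}}(P)=\frac{1}{d}\sum_{v}d_{v}\widehat{\lambda}_{\mathcal{D},v}(P)+\hat{c}$ avec $\hat{c}$ ind\'ependant de $P$. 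Pour la propri\'et\'e 7, la formule du produit $\sum_{v}d_{v}v(a)=0$ pour $a\in k^{*}$ est d\'ecisive : elle rend coh\'erent le choix de $f$ \`a scalaire pr\`es et, en sommant $(*)$ sur toutes les places, donne $H([2]P)=4H(P)$ o\`u $H(P):=\frac{1}{d}\sum_{v}d_{v}\widehat{\lambda}_{\mathcal{D},v}(P)$. Puisque $H-\widehat{h}_{A,\mathcal{D}}$ est born\'ee et satisfait $g\circ[2]=4g$, elle est identiquement nulle, donc $\hat{c}=0$.

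Le point le plus d\'elicat sera la v\'erification que $\psi_{v}$ est bien globalement born\'ee sur $A(k_{v})$, y compris pr\`es des supports de $\mathcal{D}$ et de $[2]^{-1}\mathcal{D}$ : elle repose sur le choix pr\'ecis du mod\`ele de $\lambda_{\mathcal{D},v}$ (fonction de Green \`a singularit\'es logarithmiques le long de $\mathcal{D}$ aux places archim\'ediennes, intersection arakelovienne sur un mod\`ele entier aux places finies) et sur la compensation exacte des p\^oles garantie par l'\'egalit\'e $[2]^{*}\mathcal{D}=4\mathcal{D}+\divi(f)$. Une fois cette extension born\'ee \'etablie, le reste de la preuve est purement formel et alg\'ebrique.
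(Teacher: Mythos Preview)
Le papier ne d\'emontre pas ce th\'eor\`eme : il est \'enonc\'e comme un r\'esultat classique d\^u \`a N\'eron, avec renvoi \`a \cite{HiSi} page 242, et aucune preuve n'est fournie dans le texte. Il n'y a donc pas de comparaison \`a faire avec une preuve du papier.

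Cela dit, ta proposition est correcte et suit exactement l'approche standard (proc\'ed\'e de Tate local) qu'on trouve dans la r\'ef\'erence cit\'ee. La construction par s\'erie t\'elescopique $\widehat{\lambda}_{\mathcal{D},v}=\lambda_{\mathcal{D},v}+\sum_{n\geq 0}4^{-n-1}\psi_v([2^n]P)$ est bien celle utilis\'ee plus loin dans le papier (proposition \ref{d\'ecomposition dim g}) pour un diviseur $\mathcal{D}$ explicite, et ton argument d'unicit\'e via $g\circ[2]=4g$ born\'ee $\Rightarrow g\equiv 0$ est le bon. Le point que tu identifies comme d\'elicat (bornitude de $\psi_v$ sur tout $A(k_v)$) est effectivement le c\oe ur technique ; il est trait\'e dans \cite{HiSi} par la machine de Weil et le fait que $\psi_v$, vue comme hauteur locale associ\'ee au diviseur nul $[2]^*\mathcal{D}-4\mathcal{D}-\divi(f)=0$, est $M_k$-born\'ee. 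Ta d\'ecomposition sym\'etrique/antisym\'etrique pour le cas g\'en\'eral est \'egalement standard.
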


\subsection*{Espace de Siegel et fonctions th\^eta}\label{domaine de Siegel}

Soit $v$ une place archim\'edienne. On notera ${\mathfrak S}_{g}$ l'espace de Siegel associ\'e aux vari\'et\'es
ab\'eliennes sur $\overline{k}_{v}$ principalement polaris\'ees de dimension $g$ et munies d'une base symplectique (on pourra consulter \cite{BiLa} page 213). C'est l'ensemble des matrices $\tau=\tau_{v}$ de taille $g\times g$ sym\'etriques \`a coefficients complexes et v\'erifiant la condition $\Ima \tau >0$ (\textit{i.e.} d\'efinies positives). Cet espace est muni d'une action transitive du groupe symplectique $\Gamma = \Sp(2g,\mathbb{R})$ donn\'ee par :
$$\left[ \begin{array}{cc}
A & B   \\
C & D   \\ 	
\end{array}\right]\!\cdot\!\tau=(A\tau+B)(C\tau+D)^{-1}.$$

On consid\`ere alors $F_{g}$ un domaine fondamental pour l'action du sous-groupe $\Sp_{2g}(\mathbb{Z})$. On peut choisir $F_{g}$ de telle sorte qu'une matrice $\tau$ de ce domaine v\'erifie en particulier les conditions suivantes (voir \cite{Frei} page 34) :

\begin{itemize}

\item[$\bullet$]{S1 : Pour tout $\sigma\in{\Sp_{2g}(\mathbb{Z})}$ on a : $\det(\Ima\sigma \cdot \tau)\leq \det(\Ima\tau)$. On dira que $\Ima\tau$ est \emph{maximale} pour l'action de $\Sp_{2g}(\mathbb{Z})$.}

\item[$\bullet$]{S2 : Si $\Ree\tau=(a_{i,j})_{1\leq i, j \leq g}$ alors $|a_{i,j}|\leq \frac{1}{2}$ pour tous $1\leq i,j\leq g$.}

\item[$\bullet$]{S3 : Si $\Ima\tau=(b_{i,j})$ alors pour tout $l\in\{1,...,g\}$ et tout $\zeta=(\zeta_{1},...,\zeta_{g})\in{\mathbb{Z}}^{g}$ tel que $\pgcd(\zeta_{l},...,\zeta_{g})=1$ on a $^{t}\zeta (\Ima\tau) \zeta \geq b_{l,l}$. De plus pour tout $i\in\{1,...,g\}$ on a $b_{i,i+1}\geq 0$.}

\end{itemize}

Ces conditions impliquent $b_{g,g}\geq...\geq b_{1,1}\geq \sqrt{3}/2$ et $ b_{i,i}/2\geq|b_{i,j}|$.
On d\'efinit alors pour $z\in{\mathbb{C}^g}$ et $\tau\in{{\mathfrak S}_g}$ les s\'eries th\^eta avec caract\'eristiques $a,b\in{\mathbb{R}^g}$ :

\begin{equation}\label{fonction theta}
\displaystyle{\theta_{a,b}(z,\tau)=\sum_{n\in{\mathbb{Z}^g}}e^{i\pi\,(n+a)'\tau(n+a)+2i\pi\,(n+a)'(z+b)},}
\end{equation}
o\`u $x'$ est le vecteur ligne transpos\'e du vecteur colonne $x\in{\mathbb{R}^g}$.
\noindent On dira que les matrices de $F_g$ sont r\'eduites au sens de Siegel.

\subsection*{Hauteur de Faltings positive}

Soient $k$ un corps de nombres de degr\'e $d$ et $S=\Spec(\mathcal{O}_{k})$ le spectre de son anneau d'entiers. Un \emph{fibr\'e vectoriel m\'etris\'e} de rang $r$ sur $S$ est un $\mathcal{O}_{k}$-module projectif $\mathcal{L}$ de rang $r$ muni d'une collection $\{||.||_{v}\}_{v\in{M_{k}^{\infty}}}$ telle que $||.||_{v}$ soit une norme hermitienne sur le $k_{v}$-espace vectoriel $\mathcal{L}\otimes_{\mathcal{O}_{k}}\overline{k}_{v}$, v\'erifiant $||x||_{v}=||\overline{x}||_{\overline{v}}$ pour tout plongement $v : k \hookrightarrow \mathbb{C}$.

Le \emph{degr\'e d'Arakelov} d'un fibr\'e en droites m\'etris\'e $(\mathcal{L},||.||_{v})$ sur $S$ est d\'efini, en prenant un \'el\'ement non nul $s\in{\mathcal{L}}$ :
$$\degrar(\mathcal{L})=\log\Card\Big(\mathcal{L}/s \mathcal{O}_{k}\Big)-\sum_{v \in{M_k^{\infty}}}d_v \log \Vert s \Vert_{v}.$$
La formule du produit nous assure que ce degr\'e ne d\'epend pas globalement du choix de section $s$ non nulle (mais les formules locales d\'ependent de la section bien entendu).

Soit alors $A/k$ une vari\'et\'e ab\'elienne de dimension $g\geq 1$. Soient $\mathcal{A}\rightarrow S$ son mod\`ele de N\'eron, $\varepsilon : S\rightarrow \mathcal{A}$ sa section neutre et $\Omega_{\mathcal{A}/S}^{g}$ le faisceau des $g$-formes diff\'erentielles, qui est localement libre de rang $1$. On pose $\omega_{\mathcal{A}/S}=\varepsilon^{*}(\Omega_{\mathcal{A}/S}^{g})$ ; c'est un fibr\'e en droites sur $S=\Spec(\mathcal{O}_{k})$ qu'on peut identifier au module de ses sections globales. On munit ce fibr\'e des m\'etriques suivantes :
\begin{equation}\label{metrique}
\forall \alpha\in{\omega_{\mathcal{A}/S}\otimes_{v}\mathbb{C}},\;\; ||\alpha||_{v}^{2}=\frac{i^{g^2}}{(2\pi)^{2g}}\int_{A_{v}(\mathbb{C})}\alpha \wedge \overline{\alpha},
\end{equation}
\noindent o\`u on a identifi\'e $\alpha$ \`a une section globale de $\Omega_{\mathcal{A}/S}^{g}$. On notera qu'on a choisi ici d'\'elever $2\pi$ \`a la puissance $2g$, voir la discussion plus bas. On d\'efinit alors :

\begin{defin} \label{hauteur de Faltings relative}
Soit $A/k$ une vari\'et\'e ab\'elienne semi-stable d\'efinie sur un corps de nombres $k$. On appelle \emph{hauteur de Faltings positive} la quantit\'e :
$$\hFplus(A)=\frac{1}{d}\,\degrar(\omega_{\mathcal{A}/S}).$$
\end{defin}
\noindent
On a donc la relation $$\hFplus(A)=\frac{g}{2}\log(2\pi^2)+\hF(A),$$ o\`u $\hF(A)$ est la hauteur de Faltings de \cite{Fal}.

\begin{rem}\label{norma}
\emph{Un point sur la normalisation des m\'etriques hermitiennes.} 

La puissance $i^{g^2}$ au num\'erateur est en fait $i^g(-1)^{\frac{g(g-1)}{2}}$, le terme $-1$ provient du caract\`ere altern\'e du produit ext\'erieur. Il y a ensuite (au moins) cinq possibilit\'es int\'eressantes pour le d\'enominateur.
\begin{itemize}
\item[A.\,] $2^g$ : cela permet de simplifier le passage des coordonn\'ees complexes aux coordonn\'ees r\'eelles dans les calculs de volumes, voir par exemple l'article de Chai dans \cite{CorSil} page 250. C'est le choix de \cite{Fal}.
\item[B.\,] $(2\pi)^g$ : permet d'obtenir une formule de hauteur dans le cas CM qui ne fait pas intervenir de puissance de $\pi$, d'apr\`es la formule de Chowla-Selberg. Voir par exemple \cite{Del} page 29.
\item[C.\,] $(2\pi)^{2g}$ : assure que la hauteur de Faltings est toujours positive, d'apr\`es une in\'egalit\'e de Bost, voir \cite{Bost} ou \cite{Aut2}, ainsi que des d\'etails de preuve dans \cite{GauR}.
\item[D.\,] $(4\pi)^{\frac{2g}{3}}$ : supprime le terme constant dans la formule de Noether donn\'ee dans \cite{MB}, si on garde la normalisation traditionnelle du $\delta$ de Faltings.
\item[E.\,] $1$ : \'evite les constantes dans la d\'efinition initiale. C'est le choix fait dans \cite{Col}.
\end{itemize}
\vspace{0.2cm}

\noindent Dans ce travail, nous avons opt\'e pour le choix C. Un choix qui conduit plus naturellement \`a la question : les vari\'et\'es de hauteur minimale ont-elle une structure particuli\`ere ? Ce que l'on pourrait formuler de la mani\`ere suivante :

\end{rem}

\begin{ques}(Bogomolov sur $\mathcal{A}_g$)\label{Bogomolov}
Peut-on trouver un r\'eel explicite \textbf{optimal} $\varepsilon_g>0$ qui ne d\'epende que de $g$ et tel que pour toute vari\'et\'e ab\'elienne $A$ sur $\overline{\mathbb{Q}}$ de dimension $g$ on ait $h_{F^{+}}(A)\geq \varepsilon_g$ ? Pour quelle d\'ependance en $g$ ? Est-ce que $\varepsilon_g$ est toujours atteint pour une vari\'et\'e admettant des multiplications complexes ?
\end{ques}

\noindent Nous savons d'ores et d\'ej\`a que l'additivit\'e de la hauteur va fournir des contraintes sur la famille $(\varepsilon_g)_{g\geq 1}$. Par exemple si $E$ est une courbe elliptique sur $\overline{\mathbb{Q}}$ on aura $h_{F^{+}}(E^g)\geq \varepsilon_g$, ce qui implique $g\varepsilon_1\geq \varepsilon_g$. Une formule du type $\varepsilon_g=c_0 g$, avec $c_0$ une constante universelle, serait fonctorielle. La litt\'erature nous renseigne dans la remarque suivante sur l'existence de ``petits points''.

\begin{rem}
On donne ici des exemples de calculs de la hauteur de Faltings avec le choix de m\'etrique fait ici :
\begin{enumerate}
\item (D'apr\`es Bost, Mestre, Moret-Bailly \cite{BoMeMo} page 93) On observe la courbe $C$ de genre 2 donn\'ee par l'\'equation affine $y^2+y=x^5$ sur un corps de nombres sur laquelle elle est semi-stable, alors $$\hFplus(J_C)=3\log2\pi-\frac{1}{2}\log\left(\Gamma\Big(\frac{1}{5}\Big)^{5}\Gamma\Big(\frac{2}{5}\Big)^{3}\Gamma\Big(\frac{3}{5}\Big)\Gamma\Big(\frac{4}{5}\Big)^{-1}\right),$$ donc de valeur approch\'ee $\hFplus(J_C)=0,38537...\geq \varepsilon_2.$
\item (D'apr\`es Chowla-Selberg, voir Deligne \cite{Del} page 29) Si $E$ est une courbe elliptique semi-stable sur un corps de nombres et \`a multiplication complexe par l'anneau des entiers de $\mathbb{Q}(\sqrt{-D})$ o\`u $-D$ est le discriminant, on note $\epsilon$ le caract\`ere quadratique de Dirichlet, $w$ le nombre d'unit\'es et $h$ le nombre de classes, alors $$\hFplus(E)=\frac{1}{2}\log{2\pi}-\frac{1}{2}\log\left(\frac{1}{\sqrt{D}}\left[\prod_{0<a<D}\Gamma\Big(\frac{a}{D}\Big)^{\epsilon(a)}\right]^{\frac{w}{2h}} \right),$$ en particulier si on choisit $D=3$ on trouve la valeur approch\'ee $\hFplus(E)=0,16993...\geq \varepsilon_1.$
\end{enumerate}
\end{rem}

\section{Mod\`eles de Moret-Bailly des vari\'et\'es ab\'eliennes}
\label{baseara}
On rappelle dans ce paragraphe une partie de la th\'eorie des mod\`eles de Moret-Bailly, voir \cite{Moret} et
\cite{Moret2}. Cela reprend des travaux de \cite{Bost2}, \S~4.~2, dans la m\^eme veine que \cite{Paz2, boda}.

\subsection{D\'efinitions}
\label{defini}
Soit $k$ un corps de nombres, ${\mathcal O}_k$ son anneau d'entiers et
$\pi\colon{\mathcal A}\longrightarrow \Spec({\mathcal O}_k)$ un sch\'ema en groupes semi-stable,
{\it i. e.} un sch\'ema en groupes lisse de type fini et s\'epar\'e sur $\Spec({\mathcal
O}_k)$,  tel que les composantes de ses fibres sont des extensions de vari\'et\'es ab\'eliennes par des tores.

%Un fibr\'e en droites hermitien ${\overline{\mathcal L}}$ sur $\mathcal A$ est dit {\em cubiste} s'il existe une structure cubiste, au sens de \cite{Moret}, I.~2.~4.~5, sur le $\mathbb{G}_m$-torseur sur $\mathcal A$ d\'efini par $\mathcal L$, d\'efinie avec les notations de {\it loc. cit.} par une section $\tau$ de ${\mathcal
%D}_3({\mathcal L})$ de norme $1$, lorsque ${\mathcal
%D}_3({\mathcal L})$ est munie de la structure hermitienne deduite de celle de $\overline{\mathcal L}$.

%En d'autres termes, en notant
Soit $\mathcal{L}$ un fibr\'e en droite sur $\mathcal{A}$. Notons
$$p_i\colon {\mathcal L}^3:={\mathcal L}\times_{{\mathcal O}_k}{\mathcal L}\times_{{\mathcal O}_K}{\mathcal L}\longrightarrow {\mathcal L}\, , \hspace{.5cm} i=1,2,3 $$
les projections sur les trois facteurs, notons
$$ p_I\colon {\mathcal A}^3\longrightarrow{\mathcal A}$$
le morphisme envoyant un point g\'eom\'etrique $(x_1,x_2,x_3)$ sur $\sum_{i\in I}x_i$, pour tout sous-ensemble non vide $I$ de $\{1,2,3\}$, et notons par ${\overline{{\mathcal O}_{{\mathcal A}^3}}}$ le fibr\'e hermitien trivial
 $({\mathcal O}_{{\mathcal A}^3}, \Vert.\Vert)$ donn\'e par $\Vert 1\Vert=1$. Alors,
un fibr\'e en droites hermitien $\overline{\mathcal L}$ sur $\mathcal A$ est cubiste si et seulement s'il existe un isomorphisme isom\'etrique
\begin{equation}
\label{cubist}
{\mathcal D}_3(\overline{\mathcal L}):=\bigotimes_{I\subset\{1,2,3\},I\neq\emptyset}
\left(p_{I}^{\star}{\overline{\mathcal
L}}\right)^{\otimes(-1)^{\#I}}\stackrel{\sim}{\longrightarrow}\overline{{\mathcal O}_{{\mathcal A}^3}}
\end{equation}
de fibr\'es en droites hermitiens sur ${\mathcal A}^3$ satisfaisant de bonnes conditions de sym\'etrie et de cocycle 
({\it confer} \cite{Moret}, I.~2.~4.~5., (i) et (iii)). La relation~(\ref{cubist}) implique que si 
${\varepsilon}\colon \Spec({\mathcal O}_k)\longrightarrow{\mathcal A}$ est la section neutre,
$$\varepsilon^{\star}\overline{\mathcal L}\simeq \overline{{\mathcal O}}_{\Spec({\mathcal O}_k)}\;,$$
et si ${\mathcal A}_k$ est une vari\'et\'e ab\'elienne, la $(1,1)$ forme $c_1(\overline{\mathcal L})$ est invariante par translation sur tous les tores complexes ${\mathcal A}_{\sigma}(\mathbb{C})$, pour
$\sigma\colon k\hookrightarrow \mathbb{C}$.

R\'eciproquement, si $\mathcal A$ est un sch\'ema ab\'elien sur
${\mathcal O}_k$, ces deux derni\`eres propri\'et\'es caract\'erisent les fibr\'es en droites hermitiens sur $\mathcal A$.

Soit $\pi\colon{\mathcal A}\longrightarrow \Spec({\mathcal O}_k)$ un sch\'ema en groupes semi-stable dont la fibre g\'en\'erique ${\mathcal A}_k$ est une vari\'et\'e ab\'elienne. Pour tout fibr\'e en droites $\mathcal M$ sur $\mathcal A$, l'image directe $\pi_{\star}{\mathcal M}$ est coh\'erente (voir \cite{Moret}, lemme~VI, I.~4.~2) et sans torsion,
donc localement libre. Si $\overline{{\mathcal L}_k}$ est un fibr\'e en droites cubiste hermitien sur $\mathcal A$ et si
${\mathcal L}_k$ est ample sur ${\mathcal A}_k$, alors $\mathcal L$ est ample on $\mathcal A$ (voir \cite{Ray},
th\'eor\`eme VIII.~2, et \cite{Moret}, proposition~VI.~2.~1) et $c_1({\mathcal L})$ est strictement positif sur
${\mathcal A}(\mathbb{C})$. En effet, elle est invariante par translation sur chaque composante de ${\mathcal A}(\mathbb{C})$  et cohomologue \`a une forme $(1,1)$ strictement positive. On peut ainsi d\'efinir $\pi_{\star}(\overline{\mathcal
L})$ comme le fibr\'e vectoriel hermitien de rang 
$$\rho({\mathcal L}_{\overline{\mathbb{Q}}}):=\frac{1}{g!}c_1({\mathcal L}_{\overline{\mathbb{Q}}})^g$$
sur $\Spec({\mathcal O}_k)$ tel que $\pi_{\star}({\mathcal L})$ muni de la structure hermitienne h\'erit\'ee de la m\'etrique $L^2$ not\'ee $\Vert.\Vert$ sur $\overline{\mathcal L}$ et la mesure de {Haar} normalis\'ee sur le tore complexe ${\mathcal A}_{\sigma}(\mathbb{C})$.  En d'autres termes, pour toute section $s\in\pi_{\star}{\mathcal L}\otimes_{\sigma}\mathbb{C}\simeq {\rm H}^2({\mathcal A}_{\sigma},{\mathcal
L}_{\sigma})$, on pose
$$\Vert s\Vert_{\sigma}^2=\int_{{\mathcal A}_{\sigma}(\mathbb{C})}\Vert s(x)\Vert^2_{\overline{\mathcal L}}d\mu(x)\;,$$
o\`u $d\mu$ d\'esigne la mesure de {Haar} normalis\'ee sur ${\mathcal A}_{\sigma}(\mathbb{C})$.

\begin{defin}\label{$MB$--model}
Soit $A$ une vari\'et\'e ab\'elienne sur $\overline{\mathbb{Q}}$, soit $L$ un fibr\'e en droites ample et sym\'etrique sur $A$ et
$F$ un sous-ensemble fini de $A(\overline{\mathbb{Q}})$. On appelle {\em $MB$-mod\`ele} de $(A,L,F)$ sur un corps de nombres $k\subset\overline{\mathbb{Q}}$ la donn\'ee suivante :
\begin{itemize}
\item[$\bullet$] un sch\'ema en groupes semi-stable $\pi\colon {\mathcal A}\longrightarrow \Spec({\mathcal O}_k)$,
\item[$\bullet$] un isomorphisme $i\colon { A}\rightarrow^{\!\!\!\!\!\!\sim} {\mathcal A}_{\overline{\mathbb{Q}}}$ de vari\'et\'es abeliennes
sur $\overline{\mathbb{Q}}$,
\item[$\bullet$] un fibr\'e hermitien cubiste ${\overline{\mathcal L}}$ sur $\mathcal A$,
\item[$\bullet$] un isomorphisme $\varphi$ comme pr\'ecis\'e en \cite{Paz2} dans la proposition 2.5.
\item[$\bullet$] pour tout $P\in F$, une section $\varepsilon_P\colon \Spec({\mathcal O}_k)\longrightarrow
{\mathcal A}$ de l'application $\pi$ telle que le point g\'eom\'etrique $\varepsilon_{P,\overline{\mathbb{Q}}}\in {\mathcal A}(\overline{\mathbb{Q}})$ \'egale le point $i(P)$,
\end{itemize}
satisfaisant de plus les conditions : il existe un sous-sch\'ema $\mathcal K$ de $\mathcal A$, plat et fini sur $\Spec({\mathcal O}_k)$, tel que $i^{-1}({\mathcal K}_{\overline{\mathbb{Q}}})$ \'egale le groupe de Mumford $K(L^{\otimes 2})$, \textit{i.e.} le sous-groupe alg\'ebrique fini de $A$ dont les points rationels $x$ sur $\overline{\mathbb{Q}}$ sont caract\'eris\'es par l'existence d'un isomorphisme de fibr\'es en droites sur $A$
$$t_x^{\star}L^{\otimes2}\simeq L^{\otimes2}\;.$$
\end{defin}

\begin{defin}\label{bon}
Pour tout triplet $(A,L,r)$ avec $A$ une vari\'et\'e ab\'elienne sur $\overline{\mathbb{Q}}$ et $L$ un fibr\'e en droites sym\'etrique et ample, $r>0$ un entier pair, on dit qu'un corps de nombres $k$ est $MB$ s'il existe un $MB$--mod\`ele du type $(\pi\colon{\mathcal A}\longrightarrow \Spec({\mathcal O}_k), i, {\overline{\mathcal L}},
\varphi,(\varepsilon_P)_{P\in A_{r^2}})$ rationnel sur $k$.
\end{defin}

\begin{rem}
On peut construire des corps de nombres $MB$ en utilisant le th\'eor\`eme de r\'eduction semi-stable (\textit{confer} \cite{Moret} Theorem 3.5 page 58).
\end{rem}

\subsection{Propri\'et\'es des $MB$--mod\`eles}
\label{propmb}
Les propri\'et\'es principales des $MB$--mod\`eles sont montr\'ees par
 Moret-Bailly en \cite{Moret} et \cite{Moret2}. Voir de plus Breen \cite{Breen} et Mumford \cite{Mum1}. En voici une pr\'esentation concise :

\begin{thm}
\label{prmbmod} Soit $A$ une vari\'et\'e ab\'elienne de dimension $g$ sur $\overline{\mathbb{Q}}$, soit $L$ un fibr\'e en droites sym\'etrique et ample sur $A$ et $F$ un sous-ensemble fini de $A({\overline{\mathbb{Q}}})$. On a

\begin{itemize}
\item[{\rm (i)}] {\em Existence}. Pour tout corps de nombres $k_0$, il existe un corps de nombres $k$ contenant $k_{0}$ et un $MB$--mod\`ele
$(\pi\colon{\mathcal A}\longrightarrow \Spec({\mathcal O}_k), i, {\overline{\mathcal L}},
\varphi,(\varepsilon_P)_{P\in F})$ pour les donn\'ees $(A,L,F)$.
\item[{\rm (ii)}] {\em{Hauteur de N\'eron-Tate}}. Pour tout $MB$--mod\`ele comme en {\rm (i)} et pour tout point $P\in F$, la hauteur normalis\'ee $[k:\mathbb{Q}]^{-1}\widehat{\degr}(\varepsilon^{\star}_P\overline{\mathcal L})$ \'egale la valeur en $P$ de la hauteur de N\'eron-Tate associ\'ee \`a $L$ et not\'ee $\widehat{h}_{L}(P)$.
\item[{\rm (iii)}] {\em Ind\'ependance des $MB$--mod\`eles}. Pour toute paire de $MB$--mod\`eles
$$(\pi\colon{\mathcal A}\longrightarrow \Spec({\mathcal O}_k), i, {\overline{\mathcal L}},
\varphi,(\varepsilon_P)_{P\in F})$$ et $$(\pi'\colon{\mathcal A}\longrightarrow
\Spec({\mathcal O}_k), i', {\overline{{\mathcal L}'}},
\varphi',(\varepsilon'_P)_{P\in F})$$ de $(A,L,F)$ sur un corps de nombres $k$, les isomorphismes canoniques 
$i$, $\varphi$, $i'$ et $\varphi'$~:
$$(\pi_{\star}{\mathcal L})_{\overline{\mathbb{Q}}}\simeq {\rm H}^0(A,L)\simeq (\pi'_{\star}{{\mathcal L})'}_{\overline{\mathbb{Q}}}$$
et
$$(\varepsilon_P^{\star}{\mathcal L})_{\overline{\mathbb{Q}}}\simeq  L_{\vert P}\simeq ({\varepsilon'_P}^{\star}{{\mathcal
L}'})_{\overline{\mathbb{Q}}}\hspace{1cm} (\forall P\in F)$$ 
s'\'etendent en isomorphismes isom\'etriques de fibr\'es en droites hermitiens sur la base $\Spec({\mathcal O}_k)$~:
$$\pi_{\star}({\overline{\mathcal L}})\simeq \pi'_{\star}({\overline{{\mathcal L}'}})$$
et
$${\varepsilon_P}^{\star}({\overline{\mathcal L}})\simeq {\varepsilon'_P}^{\star}({\overline{{\mathcal L}'}})\;.$$
\item[{\rm (iv)}] {\em Extensions des scalaires}. Soit
 $$(\pi\colon{\mathcal A}\longrightarrow \Spec({\mathcal O}_k), i, {\overline{\mathcal L}},
\varphi, (\varepsilon_P)_{P\in F})$$
 un $MB$-mod\`ele sur un corps de nombres $k$ et soit $k'$ un corps de nombres tel que $k\subset k' \subset\overline{\mathbb{Q}}$. On obtient par extension des scalaires de ${\mathcal O}_k$ \`a ${\mathcal O}_{k'}$ un sch\'ema en groupes semi-stable
$$\tilde{\pi}\colon\tilde{{\mathcal A}}:={\mathcal A}\times_{{\mathcal O}_k}{\mathcal
O}_{k'}\longrightarrow \Spec({\mathcal O}_{k'})\;,$$
un fibr\'e en droites hermitien $\overline{\tilde{\mathcal L}}$ sur $\tilde{\mathcal A}$ (en tirant en arri\`ere $\overline{\mathcal L}$
par la premi\`ere projection ${\mathcal A}\times_{{\mathcal O}_k}{\mathcal O}_{k'}\longrightarrow {\mathcal A}$), des sections
$$\tilde{\varepsilon}_P:=\varepsilon_P\otimes_{{\mathcal O}_k}{\mathcal
O}_{k'}\colon\Spec({\mathcal O}_{k'})\longrightarrow\tilde{\mathcal A}\;,$$
et les isomorphismes $i$ et $\varphi$ d\'eterminent des isomorphismes :
$$\tilde{i}\colon A\rightarrow^{\!\!\!\!\!\!\sim}{\tilde{\mathcal A}}_{\overline{\mathbb{Q}}} \hspace{.5cm} \mbox{\it et}\hspace{.5cm}
\tilde{\varphi}\colon L\rightarrow^{\!\!\!\!\!\!\sim}\tilde{i^{\star}}{\mathcal L}_{\overline{\mathbb{Q}}}\;.$$

Le 5-uplet $(\tilde{\pi}\colon \tilde{\mathcal
A}\longrightarrow \Spec({\mathcal O}_{k'}), \tilde{i}, {\overline{\tilde{\mathcal L}}},
\tilde{\varphi}, (\tilde{\varepsilon}_P)_{P\in F})$ est un $MB$-mod\`ele de $(A,L,F)$ sur $k'$.
De plus, si $j\colon \Spec({\mathcal O}_{k'})\longrightarrow\Spec({\mathcal O}_k)$ est l'application inclusion ${\mathcal O}_k\hookrightarrow{\mathcal O}_{k'}$, alors l'isomorphisme canonique :
$$j^{\star}\pi_{\star}{\mathcal L}\longrightarrow\tilde{\pi}_{\star}\tilde{\mathcal L}$$
d\'efinit un isomorphisme isom\'etrique de fibr\'es vectoriels hermitiens sur la base $\Spec({\mathcal O}_{k'})$ :
$$j^{\star}\pi_{\star}\overline{\mathcal L}\longrightarrow\tilde{\pi}_{\star}\overline{\tilde{\mathcal L}}\;.$$
\item[{\rm (v)}]{\em Pente de $\pi_{\star}\overline{\mathcal L}$}. Pour tout $MB$-mod\`ele comme en {\rm (i)}
on a $\pi_{\star}\overline{\mathcal L}$ semi-stable et :
$$\frac{\widehat{\degr}\pi_{\star}\overline{\mathcal
L}}{[K:\mathbb{Q}]\rho(L)}=-\frac{1}{2} h_{F}(A)+\frac{1}{4}\log\left(\frac{\rho(L)}{(2\pi)^g}\right).$$
\item[{\rm (vi)}] {\em Points base}. Pour tout $MB$-mod\`ele comme en {\rm (i)}, et tout $n\in\mathbb{N}^{\star}$, soit ${\mathcal
A}^{[n]}$ le plus petit sch\'ema en groupes ouvert  de $\mathcal A$ contenant $K({\mathcal L}_{\overline{\mathbb{Q}}}^{\otimes n})$. 
Si $n$ est pair et si la cl\^oture de $K({\mathcal L}_{\overline{\mathbb{Q}}}^{\otimes n})$ dans $\mathcal A$ est finie sur
$\Spec({\mathcal O}_k)$, alors les sections globales ${\rm H}^0({\mathcal A},{\mathcal L}^{\otimes n})$ engendrent ${\mathcal
L}^{\otimes n}$ sur ${\mathcal A}^{[n]}$.
\end{itemize}
\end{thm}
\begin{proof} Les preuves de {\rm (i)}-{\rm (v)} sont donn\'ees dans 
\cite{Bost2}, \S~4.~3.~2. Le {\rm (vi)} vient de \cite{Moret}, VI.~3.4 et
VI.~2.~2.
\end{proof}
\begin{rem}
On remarque que si $(\pi\colon{\mathcal A}\longrightarrow \Spec({\mathcal O}_k), i, {\overline{\mathcal L}},
\varphi,(\varepsilon_P)_{P\in F})$ est un $MB$-mod\`ele, alors $(\pi\colon{\mathcal A}\longrightarrow \Spec({\mathcal O}_{k'}), i, {\overline{\mathcal L}^{\otimes r^2}},
\varphi,(\varepsilon_P)_{P\in F})$ est aussi un $MB$-mod\`ele pour une extension $k'$ de $k$.
\end{rem}

\section{Hauteur d'un point par la formule clef}\label{formuleclef}

Soit $(A,L)$ une vari\'et\'e ab\'elienne principalement polaris\'ee et d\'efinie sur un corps de nombres $k$ et soit $F=A[4]$ le sous-groupe de 4-torsion de $A$. On se donne un MB-mod\`ele associ\'e \`a $(A,L,F)$ comme dans la partie pr\'ec\'edente $(\pi\colon{\mathcal A}\longrightarrow \Spec({\mathcal O}_K), i, {\overline{\mathcal L}},
\varphi,(\varepsilon_P)_{P\in F})$. On rappelle ici quelques notations de l'article \cite{Paz2}. 

Soit $\Gamma$ un syst\`eme de repr\'esentants de $A_{r^2}/A_r$, le quotient des points de $r^2$-torsion par le groupe des points de $r$-torsion. Notons $\mathcal{F}=(\pi_{\star}{\mathcal
L})^\Gamma$ la somme directe de $r^{2g}$ copies de $\pi_{\star}{\mathcal
L}$, index\'ee par $\Gamma$. Le sous-sch\'ema ${\mathcal B}_{\mathcal F}$ des points bases du syst\`eme lin\'eaire $\mathcal F$ de sections de $\mathcal{L}^{\otimes r^2}$ est d\'efini comme le sous-sch\'ema ferm\'e de $\mathcal A$ dont le faisceau d'id\'eaux ${I}_{{\mathcal B}_{\mathcal F}}$ est tel que l'image de l'application canonique $\pi^{\star}{\mathcal F}\longrightarrow {\mathcal{L}^{\otimes r^2}}$
soit ${I}_{{\mathcal B}_{\mathcal F}}.{\mathcal{L}^{\otimes r^2}}$.

Comme ${\mathcal B}_{\mathcal F}$ ne coupe pas la fibre g\'en\'erique ${\mathcal A}_k$, pour toute section $P$ de $\pi$, le sous-sch\'ema $P^{\star}{\mathcal B}_{\mathcal F}$ de $\Spec({\mathcal O}_k)$ est un diviseur. On notera~:

\begin{defin}\label{beta1}
\begin{equation}\label{betafini}
P^{\star}{\mathcal B}_{\mathcal F}=\sum_{\begin{array}{c}{\mathfrak p}\, {\rm premier}\\ {\rm de}\, {\mathcal
O}_k,\\ {\mathfrak p}\nmid\infty\end{array}}\beta_{\mathfrak p}({\mathcal L}^{\otimes r^2},{\mathcal F},P){\mathfrak p}\;.
\end{equation}
\end{defin}

\noindent Dans cette d\'efinition, les $\beta_{\mathfrak p}$ sont des entiers positifs presque tous nuls. Le fibr\'e $\mathcal{F}$ varie avec le fibr\'e $\mathcal{L}$.

\begin{defin}
\label{beta2}
Soit $(A,L)$ une vari\'et\'e ab\'elienne principalement polaris\'ee sur un corps de nombres $k$, de dimension $g$, avec $L$ sym\'etrique et ample. Soit $P$ un point de ${\mathcal A}({\mathcal O}_k)$ et $\sigma\colon k\hookrightarrow\mathbb{C}$ un plongement complexe. Soit $\tau_{\sigma}$ un point de l'espace de Siegel ${\mathfrak S}_g$ tel que~:
\begin{equation}
\label{choixtau}
{\mathcal A}_{\sigma}(\mathbb{C})\simeq\mathbb{C}^g/(\mathbb{Z}^g+\tau_{\sigma}\mathbb{Z}^g)
\end{equation}
comme vari\'et\'es ab\'eliennes principalement polaris\'ees, et soit $z\in\mathbb{C}^g$ tel que
$[z]\in\mathbb{C}^g/(\mathbb{Z}^g+\tau_{\sigma}\mathbb{Z}^g)$ soit l'image de $P_{\sigma}$ par l'application~(\ref{choixtau}). Alors on pose~:
\begin{equation}
\label{betainfini}
\beta_{\sigma}(\overline{\mathcal L}^{\otimes r^2},\overline{\mathcal F},P)
=-\frac{1}{2}\log\left(2^{\frac{g}{2}}\sum_{e\in{\mathcal
Z}_r(\tau_{\sigma})}\Vert\theta\Vert^2(rz+e,\tau_{\sigma})\right),
\end{equation}
o\`u on a not\'e ${\mathcal Z}_r(\tau_{\sigma})$ l'ensemble
$\frac{1}{r}(\mathbb{Z}^g+\tau_{\sigma}\mathbb{Z}^g)/(\mathbb{Z}^g+\tau_{\sigma}\mathbb{Z}^g)$ et o\`u on utilise $$\Vert\theta\Vert(z,\tau)=\mathrm{det}(\Ima\tau)^{\frac{1}{4}}\;\mathrm{exp}(-\pi\,^t\Ima z(\Ima \tau)^{-1}\Ima z)\;\vert\theta(z,\tau)\vert.$$ 
\end{defin}

\noindent Dans cette derni\`ere d\'efinition, la d\'ependance en $\mathcal{F}$ est cach\'ee dans le choix des fonctions th\^eta, on trouvera plus de d\'etails dans le paragraphe 4 de \cite{Paz2}. La d\'efinition $(\ref{betainfini})$ est la d\'efinition des $\beta_\sigma$ de \cite{Bost2}. 

\begin{defin}\label{beta3}
Pour $v$ une place finie associ\'ee \`a un premier $\mathfrak{p}$ de $\mathcal{O}_k$, on notera $$\beta_{L^{\otimes r^2},v}(P)=\beta_{\mathfrak p}({\mathcal L}^{\otimes r^2},{\mathcal F},P)\log \mathrm{N}_{k/\mathbb{Q}}({\mathfrak p}).$$
Pour $v$ une place archim\'edienne associ\'ee \`a un plongement $\sigma$, on notera $$\beta_{L^{\otimes r^2},v}(P)=\beta_{\sigma}(\overline{\mathcal L}^{\otimes r^2},\overline{\mathcal F},P).$$
\end{defin}

\begin{prop}\label{clef}
Soit $(A,L)$ une vari\'et\'e ab\'elienne principalement polaris\'ee sur un corps de nombres $k$ de degr\'e $d$, de dimension $g$, semi-stable, avec $L$ sym\'etrique et ample. Pour toute section $P$ de $\pi\colon {\mathcal A}\longrightarrow\Spec({\mathcal O}_k)$, on a :
$$\widehat{h}_{A,L^{\otimes r^2}}(P)  =  h_{A,L^{\otimes r^2}}(P)-\frac{1}{2} \hFplus(A)+\frac{1}{d}\sum_{v\in{M_k}} d_v \beta_{L^{\otimes r^2},v}(P)$$
o\`u $\beta_{L^{\otimes r^2},v}$  est d\'efini par (\ref{beta3}).
\end{prop}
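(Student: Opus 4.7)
L'id\'ee est de calculer le degr\'e arithm\'etique $\degrar(P^{*}\overline{\mathcal{L}}^{\otimes r^{2}})$ sur $\Spec(\mathcal{O}_{k})$ de deux mani\`eres, o\`u $\overline{\mathcal{L}}$ d\'esigne le fibr\'e cubiste sur $\mathcal{A}$ muni de la m\'etrique hermitienne \`a courbure invariante par translation de (\ref{metrique}). D'un c\^ot\'e, la formule de Bost identifiant la hauteur de N\'eron--Tate au degr\'e arakelovien sur le mod\`ele de N\'eron (voir \cite{Bost} et \cite{Bost2}) fournit imm\'ediatement
$$\widehat{h}_{A,L^{\otimes r^{2}}}(P)=\frac{1}{d}\degrar(P^{*}\overline{\mathcal{L}}^{\otimes r^{2}}).$$

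De l'autre c\^ot\'e, on utilise le syst\`eme $\mathcal{F}=(\pi_{*}\mathcal{L})^{\Gamma}$ du MB-mod\`ele, qui fournit les sections globales r\'ealisant sur la fibre g\'en\'erique le plongement th\^eta $\Theta_{r^{2}}$ servant \`a d\'efinir la hauteur na\"ive $h_{A,L^{\otimes r^{2}}}$. On \'etudie l'application d'\'evaluation $\mathrm{ev}_{P}\colon \pi^{*}\overline{\mathcal{F}}\to\overline{\mathcal{L}}^{\otimes r^{2}}$ tir\'ee en arri\`ere par $P$, et on d\'ecompose sa contribution place par place~: aux places finies, la longueur locale du conoyau est par d\'efinition (\ref{beta1}) le terme $\beta_{\mathfrak{p}}\log\mathrm{N}(\mathfrak{p})$ ; aux places archim\'ediennes, la norme hermitienne de l'\'evaluation se calcule explicitement par les fonctions th\^eta de Riemann et s'identifie au terme $\beta_{\sigma}$ via la norme $\Vert\theta\Vert$ de (\ref{betainfini}) ; enfin, la normalisation des coordonn\'ees par la section distingu\'ee $\Delta^{(2)}_{(b_{0},k_{0})}$ fait appara\^itre exactement la hauteur na\"ive $h_{A,L^{\otimes r^{2}}}(P)$. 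En rassemblant ces contributions \`a travers la suite exacte associ\'ee \`a $\mathrm{ev}_{P}$, la seule quantit\'e globale restante est le degr\'e arithm\'etique du fibr\'e $\pi_{*}\overline{\mathcal{L}}$ sur $\Spec(\mathcal{O}_{k})$.

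L'ingr\'edient crucial, et principal obstacle technique, est alors l'identit\'e de Moret-Bailly (voir \cite{MB} et \cite{Paz2}) identifiant ce degr\'e \`a la hauteur de Faltings positive~:
$$\frac{1}{d}\degrar(\pi_{*}\overline{\mathcal{L}})=-\frac{1}{2}\hFplus(A).$$
Elle d\'ecoule de l'\'equation fonctionnelle de la fonction th\^eta de Riemann, qui permet d'identifier \`a isom\'etrie pr\`es le fibr\'e $\pi_{*}\overline{\mathcal{L}}$ \`a une racine carr\'ee convenable de $\omega_{\mathcal{A}/S}$. Le facteur $-\frac{1}{2}$ provient pr\'ecis\'ement du choix de la puissance $(2\pi)^{2g}$ dans la m\'etrique (\ref{metrique}), et l'hypoth\`ese de semi-stabilit\'e assure le bon comportement du mod\`ele de N\'eron n\'ecessaire \`a cette identification. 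La v\'erification d\'etaill\'ee des constantes, qui requiert un suivi minutieux de toutes les normalisations dans les calculs pr\'ec\'edents, constitue l'essentiel du travail technique~; on s'appuie pour cela sur les d\'eveloppements de \cite{Paz2}. La combinaison des deux calculs du degr\'e arithm\'etique donne alors directement l'\'egalit\'e annonc\'ee.
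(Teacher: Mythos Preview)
Your plan is correct and matches the approach the paper relies on: the paper's own proof simply cites \cite{Paz2}, lemme 5.2, and your outline (computing $\degrar(P^{*}\overline{\mathcal{L}}^{\otimes r^{2}})$ via Bost's formula on one side and via the evaluation map on the MB-mod\`ele together with the Moret-Bailly isometry $\pi_{*}\overline{\mathcal{L}}\simeq\omega_{\mathcal{A}/S}^{-1/2}$ on the other) is exactly the argument carried out there. The only adjustment the paper makes relative to \cite{Paz2} is the passage from $\hF$ to $\hFplus$, which absorbs the dimension-dependent constant, precisely as you indicate in your discussion of the $(2\pi)^{2g}$ normalisation.
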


\begin{proof}
La preuve de cette proposition est donn\'ee dans \cite{Paz2} au lemme 5.2, modulo le passage \`a la hauteur de Faltings positive qui simplifie ici la formule en vertu de $\hFplus(A)=\hF(A)+\frac{g}{2}\log(2\pi^2)$.
\end{proof}

\section{D\'ecomposition de la hauteur de Faltings d'une jacobienne hyperelliptique}\label{hyper}

On s'int\'eresse dans cette partie aux jacobiennes de courbes hyperelliptiques. On donne une formule explicite pour la hauteur de Faltings d'une telle jacobienne $A$, g\'en\'eralisant ainsi la formule propos\'ee par Ueno dans \cite{Ueno} pour la dimension 2. Dans toute la suite $k$ d\'esigne un corps de nombres, $\mathcal{O}_{k}$ son anneau d'entiers et on note $S=\Spec(\mathcal{O}_{k})$. Soient $C/k$ une courbe hyperelliptique et $A=J_C$ sa jacobienne.

\subsection*{Equations de Weierstrass et discriminants}\label{Weierstrass}

Soient $g\geq 2$ et $k$ un corps de nombres. Soit $v\in{M_k}$. On note $k_v$ le compl\'et\'e de $k$ en la place $v$, vu comme corps de fractions de l'anneau de valuation discr\`ete $\mathcal{O}_v$. Nous allons travailler avec les \'equations de Weierstrass des courbes hyperelliptiques et leur discriminant associ\'e. Pour obtenir une meilleure formule close, nous allons utiliser les travaux de Lockhart \cite{Lock} aux places archim\'ediennes, qui fournissent via la th\'eorie de Mumford une contribution explicit\'ee en termes de fonctions th\^eta. Pour les places finies nous utiliserons les travaux de Kausz \cite{Kausz}, Maugeais \cite{Maug} et Liu \cite{Liu2, Liu3} qui impliqueront le discriminant minimal des mod\`eles de Weierstrass.

%Une \'equation de Weierstrass ($E$)
%pour une courbe hyperelliptique ($C$) est une \'equation du type :
%$$(E)\; :\;\,  y^{2}+Q(x)\, y=P(x),$$
%avec $P$ et $Q$ des polynômes \`a coefficients dans $k$ tels que $\deg Q
%\leq g$ et $\deg P = 2g+1$. On impose de plus que $P$ soit unitaire. Une telle \'equation est unique modulo
%les changements de variables du type :

%\begin{center}
% 
%$(*)\left\{ 
%\begin{tabular}{l}
%$x=u^{2}x'+s $ \\ 
%\\

%$y=u^{2g+1}y'+t(x')$ \\ 

%\end{tabular}
%\right.  $
%\end{center}

%\noindent avec $u\in{k^{*}}$, $s\in{k}$ et $t$ un polynôme \`a coefficients dans
%$k$ de degr\'e inf\'erieur ou \'egal \`a $g$ (voir proposition 1.2 page 730 de \cite{Lock}).

%\begin{defin}
%On d\'efinit alors le discriminant d'une \'equation ($E$) en posant :
%$$\Delta_{E} = 2^{\displaystyle{4g}}\disc\left(P(x)+\frac{1}{4}\,Q(x)^{2}\right).$$
%\end{defin}

\begin{defin}\label{Lockhart}(Lockhart)
Un mod\`ele de Weierstrass ($E$)
d'une courbe hyperelliptique $C$ sur $k_v$ est une \'equation du type
$$(E)\; :\;\,  y^{2}+Q(x)\, y=P(x),$$
o\`u $P$ et $Q$ sont des polyn\^omes \`a coefficients dans $k_v$ tels que $\deg Q
\leq g$ et $\deg P = 2g+1$. On choisit $P$ unitaire. Une telle \'equation est unique modulo les changements de variables

\begin{center}
 
$(*)\left\{ 
\begin{tabular}{l}
$x=u^{g}x'+s $ \\ 
\\

$y=u^{2g+1}y'+t(x')$ \\ 

\end{tabular}
\right.  $
\end{center}

\noindent o\`u $u\in{k_v^{*}}$, $s\in{k_v}$ et $t$ est un polyn\^ome \`a coefficients dans
$k_v$ de degr\'e inf\'erieur ou \'egal \`a $g$ (voir proposition 1.2 page 730 de \cite{Lock}).

Le discriminant de ($E$) est d\'efini par $\Delta_{Lock}(E) = 2^{4g}\disc\left(P(x)+\frac{1}{4}\,Q(x)^{2}\right).$ Le discriminant minimal de Lockhart $\Delta_{Lock}(C)$ de $C/k_v$ sera le discriminant de valuation minimale parmi tous les mod\`eles $(E)$ relativement aux changements de variables $(*)$.

\end{defin}

\begin{defin}\label{Liu}(Liu)
Soient $P,Q\in \mathcal{O}_v[x]$ des polyn\^omes avec $\deg P \le 2g+2$ and $\deg Q\le
g+1$ tels que
\begin{equation}
\label{eqg}
(\mathcal{E}):  y^2 + Q(x)y = P(x)
\end{equation}
est une \emph{\'equation enti\`ere pour $C$} o\`u $C/k_v$ est hyperelliptique de genre $g$. 
On note $\Delta_{Liu}(\mathcal E)=2^{-(4g+4)}\disc_{2g+2}(4P+Q^2)$ le discriminant de (\ref{eqg}).
Si on \'ecrit de plus
\begin{equation*}
  P = p_{2g+2} x^{2g+2} + \cdots + p_0 \quad\text{et}\quad
Q = q_{g+1} x^{g+1} + \cdots +q_0
\end{equation*}
o\`u $p_{2g+2}$ et $q_{g+1}$ sont des \'el\'ements (\'eventuellement nuls) de $\mathcal{O}_v$. On a unicit\'e de ce mod\`ele modulo les changements de variables (voir \cite{Liu3}, corollaire 4.33 page 296)

\begin{center}
 
$(**)\left\{ 
\begin{tabular}{l}
$\displaystyle{x=\frac{ax'+b}{cx'+d} }$ \\ 
\\

$\displaystyle{y=\frac{H(x')+e y'}{(cx'+d)^{g+1}}}$ \\ 

\end{tabular}
\right.  $
\end{center}
o\`u $\left(\begin{array}{cc}a & b \\ c & d\end{array}\right)\in{GL_2(K)}$, $e\in{K^*}$, $H\in{K[x']}$ avec $\deg H\leq g+1$.

Le discriminant minimal de Liu $\Delta_{Liu}(C)$ de $C$ sera le discriminant de valuation minimale parmi tous les mod\`eles $(\mathcal  E)$ relativement aux changements de variables $(**)$, voir \cite{Liu3} 1.26 page 464 and 1.9 page 480. 

\end{defin}

Lorsqu'on passe d'une \'equation $E$ \`a une \'equation $E'$ par le changement de coordonn\'ees ($*$) on obtient la
relation $\Delta_{E}=u^{\displaystyle{4g(2g+1)}}\Delta_{E'}$.

\subsection*{Formes diff\'erentielles}

Soit $C$ une courbe hyperelliptique donn\'ee par un mod\`ele de
Weierstrass $E$. On peut alors exhiber une base de
$H^{0}(C,\Omega^{1}_{C/k})$, donn\'ee par les formes :
$$\omega_{i}=\frac{x^{i-1}dx}{2y+Q(x)}, \; 1\leq i \leq g .$$
Consid\'erons alors la $g$-forme  $\alpha=\omega_{1}\wedge ... \wedge
\omega_{g}$. On v\'erifie que lorsqu'on change de coordonn\'ees en utilisant ($*$) dans le
mod\`ele de Weierstrass pour passer de $E$ \`a $E'$ on obtient la relation
$\alpha=u^{-g^{2}}\alpha'$. Ceci va nous permettre de trouver une
$g$-forme diff\'erentielle $\eta$ ne d\'ependant pas du mod\`ele de Weierstrass.
 On va chercher $\eta$ de la forme :
$$\eta=\Delta_{E}^{\displaystyle{a}}(\omega_{1}\wedge ... \wedge
\omega_{g})^{\displaystyle{b}},\,\, \mathrm{avec}\;\, a,b\in{\mathbb{Z}}.$$
Un changement de mod\`ele de $E$ vers $E'$ de la forme ($*$) conduit alors \`a :
$$\eta =u^{\displaystyle{4g(2g+1)a-g^{2}b}}\eta'.$$
 On fait le choix $a=g$ et
$b=4(2g+1)$ et on obtient :

\begin{prop}
La $g$-forme $\eta=\Delta_{E}^{\displaystyle{g}}\,(\omega_{1}\wedge ... \wedge
\omega_{g})^{\displaystyle{\otimes 4(2g+1)}}$ est ind\'ependante du mod\`ele de
Weierstrass $E$ choisi. Elle est de plus bien d\'efinie sur les mod\`eles $(\mathcal{E})$ et est aussi ind\'ependante du mod\`ele $\mathcal{E}$ choisi.
\end{prop}

\begin{proof}
L'invariance plus g\'en\'erale par le changement de variables $(**)$ est un calcul direct. Cela avait d\'ej\`a \'et\'e remarqu\'e dans \cite{Kausz} proposition 2.2 page 43.
\end{proof}

On va donc utiliser la section $\eta$ pour calculer la
hauteur de Faltings exprim\'ee comme en (\ref{hauteur de Faltings relative}). On a besoin pour cela du r\'esultat de la proposition suivante. L'invariance par les deux classes de changements de variables nous permettra de tirer parti du meilleur mod\`ele aux places finies (suivant \cite{Kausz}) et d'un mod\`ele pratique au regard des fonctions th\^eta aux places archim\'ediennes (suivant \cite{Lock}).

On commence par d\'efinir :

\begin{defin}

Soient $k$ un corps de nombres et $C/k$ une courbe alg\'ebrique lisse \`a r\'eduction semi-stable d\'efinie sur $k$ et de genre $g$. On note $p:C\rightarrow S$ un mod\`ele entier sur $S=\Spec(\mathcal{O}_{k})$ semi-stable de la courbe $C$. On appelle hauteur de Faltings de $C/k$ la quantit\'e :
$$\hFplus(C)=\frac{1}{[k:\mathbb{Q}]}\degrar (\det p_{*}\omega_{C/S}),$$
o\`u le choix de m\'etriques hermitiennes est $\Vert\alpha\Vert_v^2=\frac{i^{g^2}}{(2\pi)^{2g}}\int\alpha\wedge\overline{\alpha}$.
\end{defin}
On remarquera qu'il s'agit bien de la hauteur stable car on travaille sur un mod\`ele semi-stable de $C$.

\begin{prop}
Soient $k$ un corps de nombres et $C/k$ une
courbe alg\'ebrique lisse \`a r\'eduction semi-stable. On a alors :
$$\hFplus(J_C)=\hFplus(C). $$
\end{prop}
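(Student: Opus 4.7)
Le plan est de construire un isomorphisme canonique de fibr\'es en droites hermitiens sur $S=\Spec(\mathcal{O}_k)$
$$\omega_{\mathcal{J}/S}\;\simeq\;\det p_{*}\omega_{\mathcal{C}/S},$$
o\`u $\mathcal{J}\to S$ est le mod\`ele de N\'eron de $J_C$ et $\mathcal{C}\to S$ le mod\`ele semi-stable de $C$, de sorte que l'\'egalit\'e $\hFplus(J_C)=\hFplus(C)$ r\'esultera imm\'ediatement de la d\'efinition du degr\'e d'Arakelov.

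\emph{\'Etape alg\'ebrique.} Comme $C$ est semi-stable, le mod\`ele de N\'eron $\mathcal{J}$ s'identifie \`a la composante neutre $\Pico_{\mathcal{C}/S}$ du sch\'ema de Picard relatif. On dispose alors de l'isomorphisme canonique $\Lie(\mathcal{J})\simeq R^{1}p_{*}\mathcal{O}_{\mathcal{C}}$, et la dualit\'e de Grothendieck-Serre relative (valable pour les courbes semi-stables gr\^ace au faisceau dualisant $\omega_{\mathcal{C}/S}$) fournit $R^{1}p_{*}\mathcal{O}_{\mathcal{C}}\simeq (p_{*}\omega_{\mathcal{C}/S})^{\vee}$. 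En prenant le dual et le d\'eterminant maximal, on obtient l'isomorphisme annonc\'e. C'est exactement l'\'enonc\'e sous-jacent \`a la formule de Noether et on peut renvoyer \`a Moret-Bailly \cite{MB} pour les d\'etails de cette construction int\'egrale.

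\emph{\'Etape analytique.} Il reste \`a v\'erifier que, sous cet isomorphisme, les deux m\'etriques hermitiennes co\"incident \`a toute place archim\'edienne $v$. Soient $\alpha_1,\dots,\alpha_g$ une base de $H^{0}(C_v,\Omega^{1})$ et $(\gamma_1,\dots,\gamma_{2g})$ une base symplectique de $H_1(C_v,\mathbb{Z})$, choisies de sorte que $\int_{\gamma_i}\alpha_j=\delta_{ij}$ pour $1\leq i\leq g$. Alors $J_C(\mathbb{C})\simeq\mathbb{C}^g/(\mathbb{Z}^g+\tau\mathbb{Z}^g)$ avec $\tau_{ij}=\int_{\gamma_{g+i}}\alpha_j$, et les $\alpha_i$ se rel\`event en les formes $dz_i$ sur le tore. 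Un calcul direct en coordonn\'ees r\'eelles donne
$$i^{g^2}\,(\alpha_1\wedge\cdots\wedge\alpha_g)\wedge\overline{(\alpha_1\wedge\cdots\wedge\alpha_g)}=2^g\,dx_1\wedge dy_1\wedge\cdots\wedge dx_g\wedge dy_g,$$
donc $\int_{J_C(\mathbb{C})}i^{g^2}\alpha\wedge\bar\alpha=2^g\det(\Ima\tau)$ o\`u $\alpha=\alpha_1\wedge\cdots\wedge\alpha_g$. D'un autre c\^ot\'e, les relations bilin\'eaires de Riemann donnent $\int_{C_v}\alpha_i\wedge\bar\alpha_j=-2i(\Ima\tau)_{ij}$, si bien que la matrice de Gram de la m\'etrique de Hodge a pour d\'eterminant $\det(\Ima\tau)$ \`a une puissance de $2$ pr\`es, compens\'ee par le facteur $2^g$ ci-dessus. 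Avec la normalisation commune $(2\pi)^{2g}$ choisie dans les deux d\'efinitions, les deux normes co\"incident exactement.

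\emph{Difficult\'e principale.} Le point d\'elicat est l'\'etape alg\'ebrique aux places de mauvaise r\'eduction : l'isomorphisme canonique ne s'\'etend aux mod\`eles entiers que gr\^ace \`a l'hypoth\`ese de semi-stabilit\'e, qui garantit \`a la fois l'existence d'un mod\`ele r\'egulier \`a fibres r\'eduites \`a croisements normaux et la compatibilit\'e $\mathcal{J}=\Pico_{\mathcal{C}/S}$. Sans cette hypoth\`ese, le fibr\'e $\det p_{*}\omega_{\mathcal{C}/S}$ ne co\"inciderait plus avec $\omega_{\mathcal{J}/S}$, et la hauteur de Faltings stable serait strictement inf\'erieure \`a celle calcul\'ee sur un mod\`ele non semi-stable. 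L'\'etape analytique, bien que calculatoire, se r\'eduit \`a l'application des formules de Riemann et ne pr\'esente pas de difficult\'e structurelle.
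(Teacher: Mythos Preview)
Your argument is essentially the same as the paper's: the algebraic isomorphism $\varepsilon^{*}\Omega^{g}_{\mathcal{J}/S}\simeq\det p_{*}\omega_{\mathcal{C}/S}$ via $\Lie(\Pico_{\mathcal{C}/S})\simeq R^{1}p_{*}\mathcal{O}_{\mathcal{C}}$ and Grothendieck duality is exactly what the paper does, and your conclusion by taking Arakelov degrees matches. The only difference is that for the isometry at archimedean places the paper simply invokes \cite{SPA}, expos\'e~II,~4.15, whereas you sketch a direct verification via the Riemann bilinear relations; your computation is correct in spirit, though the final matching of normalisations (``\`a une puissance de $2$ pr\`es, compens\'ee'') is left imprecise and would need the exact definition of the Hodge metric on $\det p_{*}\omega_{\mathcal{C}/S}$ to be fully justified.
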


\begin{proof}
Notons $S=\Spec{\mathcal{O}_{k}}$ et soit $p:C\longrightarrow S$ un mod\`ele
entier semi-stable de la courbe $C$, de section neutre $\varepsilon$. On consid\`ere $A=\Pic^{0}_{C/S}$. On a alors :
$$\Lie(A)\simeq R^{1}p_{*}O_{C}.$$
De plus par dualit\'e de Grothendieck (on pourra consulter le paragraphe 6.4.3 page 243 de \cite{Liu3}):
$$(R^{1}p_{*}O_{C})^{\vee}\simeq p_{*}\omega_{C/S}. $$
On calcule alors :
$$\varepsilon^{*}\Omega^{1}_{A/S}\simeq \Lie(A)^{\vee}\simeq
p_{*}\omega_{C/S}, $$
d'o\`u :
$$\varepsilon^{*}\Omega^{g}_{A/S}\simeq \det p_{*}\omega_{C/S},$$ et cet isomorphisme est une isom\'etrie d'apr\`es le 4.15 de l'expos\'e II de \cite{SPA}. Il suffit alors de prendre le degr\'e d'Arakelov de chaque côt\'e pour obtenir la proposition.
\end{proof}

\subsection*{Partie non archim\'edienne}

On garde les notations des paragraphes pr\'ec\'edents. La section $\eta$ correspond au choix de section $\Lambda$ fait dans \cite{Kausz}. Dans cet article, Kausz analyse les contributions en chaque place finie, ce qui permet d'obtenir la proposition suivante :

\begin{prop}\label{hyperelliptique finie}
La section $\eta$ s'\'etend en une section globale enti\`ere sur le mod\`ele de la courbe $C$. Il existe des entiers naturels $e_v$ tels que la somme des contributions aux places finies s'exprime par :

\begin{center}
$\begin{tabular}{l}
$\log\Big(\Card(\det p_{*}\omega_{C/S}/\eta\, \mathcal{O}_{k})\Big)= $ \\
\\
$\displaystyle{g\,\log \Nk(\Delta_{\mathrm{Liu}}(C))-\sum_{v|\Delta_{\mathrm{Liu}}(C)}d_v(8g+4)e_{v}\log\Nk(v).}$\\

\end{tabular}$
\end{center}
\end{prop}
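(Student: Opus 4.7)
Le plan est de proc\'eder localement en chaque place finie $v$ de $k$, en combinant l'existence d'\'equations minimales de Weierstrass \`a la Lockhart \cite{Lock} avec l'\'etude de Kausz \cite{Kausz} comparant le mod\`ele de Weierstrass minimal au mod\`ele r\'egulier semi-stable. Fixons donc $v$. Puisque la section $\eta$ est ind\'ependante du mod\`ele de Weierstrass choisi, on peut travailler avec une \'equation $E_v$ minimale sur $\mathcal{O}_{k,v}$, pour laquelle on a $\mathrm{ord}_v(\Delta_{E_v})=\mathrm{ord}_v(\Delta_{\mathrm{min}})$.

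La deuxi\`eme \'etape consiste \`a mesurer, dans une trivialisation enti\`ere de $\det p_{*}\omega_{C/S}$ au voisinage de $v$, le d\'efaut d'int\'egralit\'e du produit ext\'erieur $\omega_1\wedge\cdots\wedge\omega_g$ issu de $E_v$. Le point cl\'e est que le mod\`ele de Weierstrass minimal n'est pas n\'ecessairement le mod\`ele r\'egulier semi-stable : les formes $\omega_i$ sont enti\`eres, mais leur produit ext\'erieur peut ne pas engendrer localement $\det p_{*}\omega_{C/S}$. Le th\'eor\`eme de Kausz fournit un entier $e_v\geq 0$ tel que $\mathrm{ord}_v(\omega_1\wedge\cdots\wedge\omega_g)=-e_v$, ainsi que la majoration effective $(8g+4)e_v\leq g\,\mathrm{ord}_v(\Delta_{\mathrm{min}})$. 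Par multiplicativit\'e de l'ordre il suit
\[
\mathrm{ord}_v(\eta)=g\,\mathrm{ord}_v(\Delta_{E_v})+4(2g+1)\mathrm{ord}_v(\omega_1\wedge\cdots\wedge\omega_g)=g\,\mathrm{ord}_v(\Delta_{\mathrm{min}})-(8g+4)e_v\geq 0,
\]
ce qui \'etablit en m\^eme temps l'int\'egralit\'e globale de $\eta$ et la contribution locale recherch\'ee.

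Pour terminer, on somme ces contributions locales sur toutes les places finies, en observant que $e_v=0$ en dehors du support de $\Delta_{\mathrm{min}}$ et en identifiant $\sum_v \mathrm{ord}_v(\Delta_{\mathrm{min}})\log\Nk(v)=\log\Nk(\Delta_{\mathrm{min}})$, de fa\c{c}on \`a reconna\^itre la forme annonc\'ee. La principale difficult\'e attendue r\'eside dans la traduction pr\'ecise de l'\'enonc\'e de Kausz \cite{Kausz} : il faut passer de son analyse combinatoire des singularit\'es du mod\`ele hyperelliptique (et des \'eclatements n\'ecessaires pour atteindre le mod\`ele semi-stable) \`a une information num\'erique sur l'ordre des formes diff\'erentielles, puis en extraire la majoration effective de $e_v$ qui garantit la positivit\'e globale $\mathrm{ord}_v(\eta)\geq 0$.
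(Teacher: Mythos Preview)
Your proposal is correct and follows essentially the same route as the paper: both rest on Kausz's analysis to obtain $\mathrm{ord}_v(\eta)=g\,\mathrm{ord}_v(\Delta_E)-(8g+4)e_v\geq 0$ and then minimise the hyperelliptic model place by place to reach $\Delta_{\mathrm{min}}$. The paper additionally records two technicalities you leave under your ``traduction pr\'ecise'' caveat --- the parity mismatch with Kausz's degree-even conventions (handled via the substitution $(x',y')=(1/x,\,y/x^g)$, which leaves $\omega_1\wedge\cdots\wedge\omega_g$ invariant) and the removal of Kausz's good-reduction-at-$2$ hypothesis via Maugeais \cite{Maug}.
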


\begin{proof}
La section s'\'etend sur le mod\`ele entier d'apr\`es le th\'eor\`eme 3.1 page 44 de \cite{Kausz}, on a ainsi $\mathrm{ord}_v(\eta)\geq 0$.
En reprenant les calculs \`a l'\'equation $(1)$ de la preuve de la proposition 5.5 page 57 de \textit{loc.cit.}, on peut d\'eduire que pour toute place finie $v$ il existe un entier $e_{v}\in{\mathbb{N}}$ d\'ependant du mod\`ele hyperelliptique et tel que :
$$\ordv(\eta)= g\ordv(\Delta_{\mathcal{E}})-(8g+4)e_{v}.$$
 Notons qu'en combinant avec l'article de Maugeais \cite{Maug}, th\'eor\`eme 1.1 (voir aussi \cite{Maug2} pour une version corrigée) on peut supprimer l'hypoth\`ese de bonne r\'eduction en $2$ faite dans \cite{Kausz}. 
\end{proof}

\subsection*{Partie archim\'edienne}

On se base ici sur les travaux de Lockhart \cite{Lock}, dont les
calculs s'appuient en bonne partie sur \cite{Mum2}, lequel n\'ecessite un mod\`ele hyperelliptique de degr\'e impair. Rappelons ici les notations de l'introduction. Tout d'abord pour $m\in{\frac{1}{2}\mathbb{Z}^{2g}}$ on pose :
$$\varphi_{m}(\tau)=\theta_{m}(0,\tau)^{8},$$
o\`u $\theta_{m}(z,\tau)$ est la fonction th\^eta de caract\'eristique $m$ associ\'ee au r\'eseau de
dimension $g$ dont la d\'efinition est rappel\'ee en (\ref{fonction theta}). Si $S$ est un sous-ensemble de $\{1,2,...,2g+1\}$ on d\'efinit alors
$\displaystyle{m_{S}=\sum_{i\in{S}}m_{i}\in{\frac{1}{2}\mathbb{Z}^{2g}}}$ avec :

\begin{center}
\begin{tabular}{llll}
$m_{2i-1}$ & $=$ & $\left[ \begin{array}{ccccccc}
^{t}(0 & ... & 0 & \frac{1}{2} & 0 & ... & 0) \\
^{t}(\frac{1}{2} & ... & \frac{1}{2} & 0 & 0 & ... & 0) \\ 	
\end{array}\right],$ & $1\leq i \leq g+1 \, ,$ \\
\\

\end{tabular}
\end{center}

\begin{center}
\begin{tabular}{llll}
$m_{2i}$ & $=$ & $\left[ \begin{array}{ccccccc}
^{t}(0 & ... & 0 & \frac{1}{2} & 0 & ... & 0) \\
^{t}(\frac{1}{2} & ... & \frac{1}{2} & \frac{1}{2} & 0 & ... & 0) \\ 	
\end{array}\right],$ & $1\leq i \leq g \, ,$ \\
\\

\end{tabular}
\end{center}

\noindent o\`u le coefficient non nul de la premi\`ere ligne est en i-\`eme
position. Soit alors $\mathcal{T}$ la collection des sous-ensembles de
$\{1,...,2g+1\}$ de cardinal $g+1$. Soit $U=\{1,3,...,2g+1\}$ et
notons $\circ$ l'op\'erateur de diff\'erence sym\'etrique. On d\'efinit alors
:

\begin{equation}\label{phitheta2}
\varphi(\tau)=\prod_{T\in{\mathcal{T}}}\varphi_{m_{T\circ U}}(\tau).
\end{equation}

\noindent On pose enfin $l=\binom{2g+1}{g+1}$ et $n=\binom{2g}{g+1}=\frac{g}{2g+1}l$. On a alors la
proposition suivante dont la preuve figure dans \cite{Lock} :

\begin{prop}(Lockhart)
Soient $C/\mathbb{C}$ une courbe hyperelliptique et $P\in{C(\mathbb{C})}$. Soit $E$ un
mod\`ele de Weierstrass de $(C,P)$. On uniformise $J_C(\mathbb{C})\simeq \mathbb{C}^{g}/\Lambda_{E}$ avec le r\'eseau $\Lambda_{E}=\Omega_{1}\mathbb{Z}^{g}+\Omega_{2}\mathbb{Z}^{g}$ et $\tau_{E}=\Omega_{1}^{-1}\Omega_{2}$. Soit $V(\Lambda_{E})$ le covolume du
r\'eseau $\Lambda_{E}$ dans $\mathbb{C}^{g}$. Alors la quantit\'e
$|\Delta_{E}|V(\Lambda_{E})^{4+\frac{2}{g}}$ ne d\'epend pas du choix de
$E$ et on a :
$$|\Delta_{E}|V(\Lambda_{E})^{4+\frac{2}{g}}=2^{4g}\pi^{8g+4}\Big(|\varphi(\tau_{E})|\det(\Ima
\tau_{E})^{2l}\Big)^{\frac{1}{n}}. $$
\end{prop}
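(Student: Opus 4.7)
Le plan est de v\'erifier d'abord l'invariance des deux membres de l'\'egalit\'e annonc\'ee sous un changement de mod\`ele de Weierstrass de la forme $(*)$, ce qui permet de se ramener \`a un mod\`ele particulier commode, puis d'\'evaluer la formule sur ce mod\`ele en invoquant la formule classique de Thomae pour les constantes th\^eta paires d'une jacobienne hyperelliptique.

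L'\'etape d'invariance se traite ainsi. On dispose d\'ej\`a de $\Delta_E = u^{4g(2g+1)}\Delta_{E'}$ et, comme not\'e dans le texte, de $\omega_1\wedge\cdots\wedge\omega_g = u^{-g^2}(\omega'_1\wedge\cdots\wedge\omega'_g)$. La matrice de p\'eriodes $\Omega_1$ calcul\'ee contre une base symplectique fix\'ee de $H_1(C,\mathbb{Z})$ est donc multipli\'ee \`a gauche par une matrice de d\'eterminant $u^{-g^2}$. Il en r\'esulte que $\tau_E = \Omega_1^{-1}\Omega_2$ est invariant, tandis que $\Lambda_E \subset \mathbb{C}^g$ est dilat\'e par $u^{-g^2}$, ce qui donne $V(\Lambda_E) = |u|^{-2g^2}V(\Lambda_{E'})$. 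L'identit\'e arithm\'etique $4g(2g+1) = 2g^2(4+2/g)$ fournit alors l'invariance du membre de gauche, celui de droite ne d\'ependant que de $\tau_E$.

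Ceci fait, je passerais au mod\`ele d\'ecompos\'e $y^2 = \prod_{i=1}^{2g+1}(x-e_i)$ (c'est-\`a-dire $Q=0$), pour lequel $\Delta_E = 2^{4g}\prod_{i<j}(e_i-e_j)^2$ et la base $\omega_i = x^{i-1}dx/(2y)$ est directement adapt\'ee \`a la formule de Thomae : pour chaque $T \in \mathcal{T}$, celle-ci affirme que $\theta_{m_{T\circ U}}(0,\tau_E)^8 = C_g\,\det(\Omega_1)^{-4}\prod_{\{i,j\}\subset T}(e_i-e_j)^2\prod_{\{i,j\}\cap T=\emptyset}(e_i-e_j)^2$, o\`u $C_g$ est une constante explicite d\'ependant uniquement de $g$ (une puissance de $\pi$ fois une puissance de $2$). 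En multipliant sur tous les $T\in\mathcal{T}$, chaque paire $\{i,j\}$ appara\^it avec multiplicit\'e $\binom{2g-1}{g-1}+\binom{2g-1}{g+1}$, qui vaut $\binom{2g}{g+1}=n$ par l'identit\'e de Pascal combin\'ee \`a la sym\'etrie $\binom{2g-1}{g-1}=\binom{2g-1}{g}$. En prenant la racine $n$-i\`eme du produit, on retrouve exactement $\prod_{i<j}|e_i-e_j|^2$, ce qui reproduit la partie ``branchement'' de $|\Delta_E|$ \`a la constante $2^{4g}$ pr\`es.

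L'assemblage final utilise l'identit\'e standard $V(\Lambda_E) = |\det\Omega_1|^2\det(\Ima\tau_E)$ : les exposants de $\det(\Ima\tau_E)$ se raccordent gr\^ace \`a l'identit\'e $2l/n = 4+2/g$, les puissances de $|\det\Omega_1|$ produites par Thomae sont absorb\'ees dans $V(\Lambda_E)^{4+2/g}$, et la constante num\'erique r\'esiduelle $C_g^{l/n}$ doit alors valoir exactement $2^{4g}\pi^{8g+4}$. Le point d\'elicat n'est pas conceptuel mais de nature comptable : il faut suivre soigneusement les puissances de $2$ et de $\pi$ \`a travers l'extraction de racine $n$-i\`eme et v\'erifier la valeur pr\'ecise de $C_g$, ce qui se lit dans \cite{Mum2}, o\`u la normalisation reliant $\Omega_1$, les formes $x^{i-1}dx/(2y)$ et les s\'eries th\^eta est explicit\'ee en d\'etails.
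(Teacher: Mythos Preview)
Le texte ne d\'emontre pas cette proposition~: il l'attribue \`a Lockhart et renvoie \`a \cite{Lock} pour la preuve. Ton plan correspond pr\'ecis\'ement \`a la d\'emarche de Lockhart (invariance par $(*)$, puis formule de Thomae et combinatoire des sous-ensembles $T\in\mathcal{T}$), et les v\'erifications que tu donnes --- notamment $4g(2g+1)=2g^2(4+2/g)$, $V(\Lambda_E)=|\det\Omega_1|^2\det(\Ima\tau_E)$, et le d\'ecompte $\binom{2g-1}{g-1}+\binom{2g-1}{g+1}=n$ --- sont correctes. Le seul point laiss\'e en suspens est le suivi explicite de la constante $C_g$ jusqu'\`a $2^{4g}\pi^{8g+4}$, que tu d\'el\`egues \`a \cite{Mum2}~; c'est l\'egitime, mais c'est aussi l\`a que r\'eside tout le travail r\'esiduel si l'on veut une preuve autonome.
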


Nous allons ainsi pouvoir calculer la contribution archim\'edienne \`a la
hauteur de Faltings positive (voir \cite{Dej} pour un calcul similaire) :

\begin{prop}\label{hyperelliptique infinie}
Soient $C/k$ une courbe de genre g donn\'ee dans un mod\`ele hyperelliptique
$E$ de discriminant $\Delta_{E}$ et
$\omega_{i}=\frac{x^{i-1}dx}{2y+q(x)},\, i\in\{1,... \,,g\},$ une base des
formes diff\'erentielles sur $C$.
Soit $v\in{M_{k}^{\infty}}$.
Soit $\eta=\Delta_{E}^{g}(\omega_{1}\wedge
... \wedge\omega_{g})^{\otimes 8g+4}$. Alors on a :
$$\log||\eta||_{v}=(8g+4)\log\left(2^{-\frac{g}{4g+2}}|\varphi(\tau_{E})|^{\frac{1}{4l}}\det(\Ima
\tau_{E})^{\frac{1}{2}}\right).$$ 

\end{prop}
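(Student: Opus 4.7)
La strat\'egie est de combiner trois ingr\'edients : la multiplicativit\'e de la norme hermitienne sous produit tensoriel, un calcul direct d'int\'egrale sur $J_C(\mathbb{C})$ pour $\omega_1\wedge\cdots\wedge\omega_g$, et la formule de Lockhart rappel\'ee juste avant l'\'enonc\'e.

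D'abord, $\Delta_E$ est un scalaire, donc $\|\eta\|_v=|\Delta_E|_v^g\,\|\omega_1\wedge\cdots\wedge\omega_g\|_v^{8g+4}$. Il s'agit ensuite de calculer $\|\omega\|_v$ avec $\omega=\omega_1\wedge\cdots\wedge\omega_g$, vue comme section globale de $\det p_*\omega_{C/S}\simeq \varepsilon^*\Omega_{J_C/S}^g$ via l'isom\'etrie rappel\'ee plus haut. En uniformisant $J_C(\bar{k}_v)\simeq \mathbb{C}^g/\Lambda_E$ de sorte que les $\omega_i$ se rel\`event en $dz_i$ sur $\mathbb{C}^g$, on a $\omega\wedge\bar\omega=dz_1\wedge\cdots\wedge dz_g\wedge d\bar z_1\wedge\cdots\wedge d\bar z_g$. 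En r\'eordonnant (ce qui co\^ute un signe $(-1)^{g(g-1)/2}$) puis en utilisant $dz_j\wedge d\bar z_j=-2i\,dx_j\wedge dy_j$, on v\'erifie que $i^{g^2}\omega\wedge\bar\omega=2^g\,dx_1\wedge dy_1\wedge\cdots\wedge dx_g\wedge dy_g$, ce qui donne, apr\`es int\'egration sur un domaine fondamental de volume euclidien $V(\Lambda_E)$,
\[
\|\omega\|_v^2=\frac{2^g}{(2\pi)^{2g}}\,V(\Lambda_E).
\]

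En \'elevant \`a la puissance $8g+4$ et en multipliant par $|\Delta_E|^g$, on obtient
\[
\log\|\eta\|_v=g\log|\Delta_E|_v+(4g+2)\log V(\Lambda_E)-(4g^2+2g)\log 2-(8g^2+4g)\log\pi.
\]
Or la formule de Lockhart, \'elev\'ee \`a la puissance $g$, fournit
\[
g\log|\Delta_E|_v+(4g+2)\log V(\Lambda_E)=4g^2\log 2+(8g^2+4g)\log\pi+\frac{2g+1}{l}\log|\varphi(\tau_E)|+(4g+2)\log\det(\Ima\tau_E),
\]
en utilisant $g/n=(2g+1)/l$. En substituant, les puissances de $\pi$ disparaissent, les puissances de $2$ se combinent en $-2g\log 2$, et il vient
\[
\log\|\eta\|_v=-2g\log 2+\frac{2g+1}{l}\log|\varphi(\tau_E)|+(4g+2)\log\det(\Ima\tau_E),
\]
qui s'\'ecrit bien $(8g+4)\log\!\left(2^{-g/(4g+2)}|\varphi(\tau_E)|^{1/(4l)}\det(\Ima\tau_E)^{1/2}\right)$.

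Le point d\'elicat n'est pas le calcul alg\'ebrique final, qui n'est qu'une v\'erification de constantes, mais la compatibilit\'e des normalisations : il faut s'assurer que la base $\omega_1,\dots,\omega_g$ du mod\`ele de Weierstrass choisie se rel\`eve exactement en $(dz_1,\dots,dz_g)$ dans l'uniformisation utilis\'ee par Lockhart (i.e. dans la convention $\tau_E=\Omega_1^{-1}\Omega_2$, le volume $V(\Lambda_E)$ qui appara\^it dans sa formule est bien celui calcul\'e ci-dessus). Une fois cette convention align\'ee, la preuve se ram\`ene \`a un appariement de coefficients entre les facteurs $(2\pi)^{2g}$ de la m\'etrique de Faltings positive et les puissances $2^{4g}\pi^{8g+4}$ qui figurent dans la formule de Lockhart.
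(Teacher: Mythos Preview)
Your proof is correct and follows essentially the same approach as the paper's: you factor $\|\eta\|_v$ as $|\Delta_E|^g\,\|\omega_1\wedge\cdots\wedge\omega_g\|_v^{8g+4}$, compute $\|\omega\|_v^2=\frac{2^g}{(2\pi)^{2g}}V(\Lambda_E)$ from the metric (\ref{metrique}), and then invoke Lockhart's formula (raised to the power $g$) together with the identity $g/n=(2g+1)/l=(8g+4)/(4l)$ to match constants. The only cosmetic difference is that you carry out the arithmetic additively in logarithms whereas the paper works multiplicatively with $\|\eta\|_v^2$; your closing remark on the compatibility of normalizations between the $\omega_i$ and Lockhart's uniformisation is a welcome clarification that the paper leaves implicit.
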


\begin{proof}

Il suffit de calculer :
\\

\begin{tabular}{lll}

$||\eta||_{v}^{2}$ & $=$ & $\displaystyle{|\Delta_{E}|^{2g}(||\omega_{1}\wedge...\wedge \omega_{g}||_{v}^{2})^{8g+4}}$\\
\\

$$ & $=$ & $\displaystyle{|\Delta_{E}|^{2g}\frac{1}{(2\pi)^{2g(8g+4)}}\left(\int_{A_{v}(\mathbb{C})}i^{g^2}\omega_{1}\wedge...\wedge \omega_{g}\wedge\overline{\omega_{1}}\wedge...\wedge \overline{\omega_{g}}\right)^{8g+4}}$\\
\\

$$ & $=$ & $\displaystyle{|\Delta_{E}|^{2g}\frac{1}{(2\pi)^{2g(8g+4)}}(2^gV(\Lambda_{E}))^{8g+4}}$\\
\\

$$ & $=$ & $\displaystyle{\frac{2^{g(8g+4)}}{(2\pi)^{2g(8g+4)}}|\Delta_{E}|^{2g}V(\Lambda_{E})^{8g+4}}$\\
\\

$$ & $=$ & $\displaystyle{\frac{2^{g(8g+4)}}{(2\pi)^{2g(8g+4)}}2^{8g^2}\pi^{2g(8g+4)}(|\varphi(\tau_{E})|\det(\Ima
\tau_{E})^{2l})^{\frac{2g}{n}}}$\\
\\

$$ & $=$ & $\displaystyle{2^{-4g}(|\varphi(\tau_{E})|\det(\Ima
\tau_{E})^{2l})^{\frac{2g}{n}}},$\\
\\

\end{tabular}

\noindent donc $$||\eta||_{v}=\displaystyle{2^{-2g}|\varphi(\tau_{E})|^{\frac{g}{n}}\det(\Ima
\tau_{E})^{\frac{2lg}{n}}}.$$
La formule est \'etablie en observant que $\frac{g}{n}=\frac{8g+4}{4l}$ et $\frac{2lg}{n}=\frac{8g+4}{2}$. On obtient lorsque $g=1$, comme $\varphi(\tau)=2^8\Delta(\tau)$ (voir \cite{Lock} page 740), l'expression $\Vert \eta\Vert_v=2^6\Delta(\tau)(\Ima\tau)^6$.

\end{proof}

\subsubsection*{Preuve du th\'eor\`eme \ref{hauteur faltings jacobienne}}

\begin{proof}

On d\'eduit la preuve du th\'eor\`eme en mettant bout \`a bout les calculs des deux paragraphes pr\'ec\'edents, rassembl\'es dans les propositions \ref{hyperelliptique finie} et \ref{hyperelliptique infinie}. On peut choisir le discriminant minimal dans la formule finale en minimisant le mod\`ele place par place. On d\'efinit donc le discriminant $\Delta_{\mathrm{min}}$ comme le produit des discriminants minimaux locaux. On a donc par d\'efinition que $\mathrm{ord}_v(\Delta_{\mathrm{min}/k})=\mathrm{ord}_v(\Delta_{\mathrm{Liu}}(C/k_v))$ pour chaque place $v$ finie de $k$. Le calcul donne naturellement l'expression de $(8g+4)\hFplus(\Jac(C))$.

\end{proof}

\section{Calculs explicites en dimension 1}\label{dim1}

On montre dans les paragraphes qui suivent que la formule de hauteur d'une courbe elliptique est bien la m\^eme dans toutes ces diff\'erentes approches.

\subsection{Formule hyperelliptique}

La sp\'ecialisation au cas du genre $g=1$  du th\'eor\`eme \ref{hauteur faltings jacobienne} fournit $l=\binom{2g+1}{g+1}=3$. Soit $E$ une courbe elliptique d\'efinie sur un corps de nombres $k$. Pour toute place archim\'edienne $v$, on note $\tau_v$ un nombre complexe tel que ${E}_{v}(\mathbb{C})\simeq\mathbb{C}/(\mathbb{Z}+\tau_{v}\mathbb{Z})$ et $\Delta_{\mathrm{min}}$ le discriminant minimal de $E/k$. Alors $e_v= 0$ et $12 f_v= \mathrm{ord}_v(\Delta_{\mathrm{min}})$, donc

\begin{center}
$\begin{tabular}{ll}
$\displaystyle{d\cdot\hFplus(E)=}$ & $\displaystyle{ \sum_{v\vert\Delta_{\mathrm{min}}}d_v \frac{1}{12}\mathrm{ord}_v(\Delta_{\mathrm{min}})\log\Nk(v)}$ \\
\\
&$\displaystyle{-\sum_{v\in{M_k^{\infty}}}d_v \log\left(2^{-\frac{2}{12}}|\varphi(\tau_{v})|^{\frac{1}{4\cdot 3}}\det(\Ima
\tau_{v})^{\frac{1}{2}}\right),}$\\
\end{tabular}$
\end{center}

\noindent de plus il y a trois sous-ensembles de cardinal $2$ dans $\{1,2,3\}$, qui fournissent apr\`es un rapide calcul $\varphi(\tau)=\theta_2^8\theta_3^8\theta_4^8=2^{8}\Delta(\tau)$. On obtient bien

$$12d\cdot\hFplus(E)=\!\sum_{v\vert\Delta_{\mathrm{min}}}\!d_v \mathrm{ord}_v(\Delta_{\mathrm{min}})\log\Nk(v)-\!\sum_{v\in{M_k^{\infty}}}\!d_v \log\left(|2^6\Delta(\tau_{v})|\det(\Ima
\tau_{v})^{6}\right).$$

\subsection{Formule d'Autissier}

La sp\'ecialisation au cas des courbes elliptiques de la formule d'Autissier donn\'ee au th\'eor\`eme \ref{Autissier} fournit :

$$\hFplus(E)=2\hat{h}_L(\Theta)+\frac{2}{d}\sum_{v\in{M_k^{\infty}}}d_v I(E_v,\lambda_v).$$

\noindent On a $\hat{h}_L(\Theta)=0$  (voir la partie de d\'efinition page 1453 de \cite{Aut}) car le diviseur est de torsion dans le cas elliptique. De plus par la proposition 2.1 page 1452 de \cite{Aut} il vient 
$$I(E,\lambda)=-\frac{1}{24}\log\left(\vert \Delta(\tau)\vert (2\Ima\tau)^6\right).$$

On obtient donc bien la m\^eme formule que celle du th\'eor\`eme \ref{elliptique} pour le cas de bonne r\'eduction partout. Dans le cas g\'en\'eral, pour les contributions locales aux places finies, le r\'esultat est donn\'e dans le th\'eor\`eme 7 de Faltings \cite{Fal}.

\bibliographystyle{amsalpha}

\vfill
{\flushleft
Fabien Pazuki\\
Th{\'e}orie des nombres, IMB, Universit{\'e} de Bordeaux\\
351, cours de la Lib\'eration, 33 405 Talence Cedex, France\\
e-mail : fabien.pazuki@math.u-bordeaux.fr
{\flushleft
et\\
Department of Mathematical Sciences, University of Copenhagen\\
Universitetsparken 5, DK-2100 Copenhagen Ø, Denmark \\
}

\end{document}